\documentclass[reqno,11pt]{amsart}
\usepackage{amsmath,amssymb,amsthm,mathrsfs}
\usepackage{epsfig,color}
\usepackage[latin1]{inputenc}
\usepackage[english]{babel}
\usepackage[numbers]{natbib}
\usepackage[colorlinks, citecolor=blue, linkcolor=red]{hyperref}

%\usepackage[notref]{showkeys}
%\usepackage{hyperref}

%%DIMENSIONI della pagina
\voffset=-1.5cm \textheight=23cm \hoffset=-.5cm \textwidth=16cm
\oddsidemargin=1cm \evensidemargin=-.1cm
\footskip=35pt
%\linespread{1.10}
\parindent=20pt

%\sloppy \allowdisplaybreaks
\numberwithin{equation}{section}

\newtheorem{ackn}{Acknowledgments\!}

\def\00{{\bf 0}}

\def\RR{\mathbb R}

 %Delimitatori correnti

\newcommand{\diver}{{\rm div \,}}

\newcommand{\eps}{{\varepsilon}}

\newtheorem*{theorem*}{Theorem}
\newtheorem{theorem}{Theorem}[section]
\newtheorem{lemma}[theorem]{Lemma}
\newtheorem{proposition}[theorem]{Proposition}

\newtheorem{corollary}[theorem]{Corollary}
\newtheorem{remark}[theorem]{Remark}

\newtheorem{definition}[theorem]{Definition}

 \begin{document}
    \title[On the critical p-Laplace equation]{On the critical $p$-Laplace equation}

  \date{}

\author{Giovanni Catino, Dario D. Monticelli, Alberto Roncoroni}

\address{G. Catino (corresponding author), Dipartimento di Matematica, Politecnico di Milano, Piazza Leonardo da Vinci 32, 20133, Milano, Italy.}
\email{giovanni.catino@polimi.it}

\address{D. Monticelli, Dipartimento di Matematica, Politecnico di Milano, Piazza Leonardo da Vinci 32, 20133, Milano, Italy.}
\email{dario.monticelli@polimi.it}

\address{A. Roncoroni, Dipartimento di Matematica, Politecnico di Milano, Piazza Leonardo da Vinci 32, 20133, Milano, Italy.}
\email{alberto.roncoroni@polimi.it}

\begin{abstract}
In this paper we provide the classification of positive solutions to the critical $p-$Laplace equation on $\mathbb{R}^n$, for  $1<p<n$, possibly having infinite energy. If $n=2$, or if $n=3$ and $\frac 32<p<2$ we prove rigidity without any further assumptions. In the remaining cases we obtain the classification under energy growth conditions or suitable control of the solutions at infinity. Our assumptions are much weaker than those already appearing in the literature. We also discuss the extension of the results to the Riemannian setting.
\end{abstract}

\maketitle

\begin{center}

\noindent{\it Key Words: quasilinear elliptic equations, qualitative properties, manifolds with nonnegative Ricci curvature}

\medskip

\centerline{\bf AMS subject classification: 35J92, 35B33, 35B06, 58J05, 53C21}

\end{center}

\

\section{Introduction}

Given $n\geq 2$ and $1<p<n$ we consider positive solutions of the well-known critical $p-$Laplace equation
\begin{equation}\label{eq-pl}
\Delta_p u + u^{p^*-1} = 0 \quad \text{ in } \mathbb{R}^n\, ,
\end{equation}
%\begin{equation} \label{eq-pl}
%\begin{cases}
%\Delta_p u + u^{p^*-1} = 0 & \text{ in } \mathbb{R}^n \\
%u>0 \, ,
%\end{cases}
%\end{equation}
where $\Delta_p$ is the usual $p-$Laplace operator and $p^*$ is the Sobolev exponent,  explicitly
$$
\Delta_p u:=\mathrm{div}(\vert\nabla u\vert^{p-2}\nabla u) \,, \quad \text{and} \quad p^*=\frac{np}{n-p}\, .
$$
The critical $p-$Laplace equation has been the object of several studies in the differential geometry and in the PDE's communities, indeed problem \eqref{eq-pl} is related to the study of the critical points of the Sobolev inequality (see e.g.  the survey \cite{Ron_lincei}) and, for $p=2$, to the Yamabe problem (see e.g. the survey \cite{LP}).  An interesting and challenging problem is the classifications of solutions to \eqref{eq-pl}: one can show that the following functions
\begin{equation} \label{tal}
\mathcal U_{\lambda,x_0} (x) := \left( \frac{\lambda^{\frac{1}{p-1}}\left(n^{\frac{1}{p}} \left(\frac{n-p}{p-1}\right)^{\frac{p-1}{p}} \right)}{\lambda^\frac{p}{p-1} + |x-x_0|^\frac{p}{p-1} } \right)^{\frac{n-p}{p}} \,, \quad \lambda >0 \,, \ x_0 \in \mathbb{R}^n \,.
\end{equation}
form a $2-$parameters family of solutions to \eqref{eq-pl}\footnote{if one chooses a different normalization constant in the numerator of the functions $\mathcal U_{\lambda,x_0}$, then one obtains a constant $k\neq 1$ as a coefficient of $u^{p^\ast-1}$  in equation \eqref{eq-pl}.}. Hence the natural question is the following:

\

{\em Given a positive solution to \eqref{eq-pl}, is it of the form \eqref{tal}?}

\

\noindent The functions described in \eqref{tal} are usually called Aubin-Talenti bubbles, since in two independent papers Aubin, in \cite{Aubin}, and Talenti, in \cite{Tal}, prove that the functions \eqref{tal} realize the equality in the sharp Sobolev inequality in $\RR^n$.

It is well known (see e.g. \cite{ding} and \cite{clap}) that there exist multiple sign-changing, non-radial, finite energy solutions to
$$
\Delta_p u + u|u|^{p^*-2} = 0 \quad \text{ in } \mathbb{R}^n.
$$
In this paper we focus on non-negative weak solutions to \eqref{eq-pl} which, by the maximum principle for quasilinear equations (see e.g. \cite{vaz}), are either zero or positive.

Turning back to the classification results of positive solutions to \eqref{eq-pl} in the seminal paper \cite{CGS} (see also \cite{Obata} and \cite{GNN} for previous important results) the authors consider the semilinear case (i.e. $p=2$) and they prove that positive smooth solutions to \eqref{eq-pl} with $p=2$ are given by the Aubin-Talenti bubbles \eqref{tal} (see also \cite{CL} and \cite{LZ}).  The proof is based on a refinement of the method of moving planes (introduced in \cite{Alex} in the context of constant mean curvature hypersurfaces and transposed in \cite{Serrin} and in \cite{GNN_bis} to study of qualitative properties of solutions of the PDE's) and on the Kelvin transform.

The quasilinear case (i.e. $1<p<n$, $p\neq 2$) is more complicated since,  for example, the Kelvin transform is not available. Nevertheless the method of moving planes has been exploited in \cite{DMMS,Sc,Vetois} to prove the following classification result:

\

{\em Let $u$ be a positive weak solution of equation \eqref{eq-pl} with finite energy, i.e.
$$
u\in\mathcal{D}^{1,p}(\mathbb{R}^n):=\lbrace u\in L^{p^{\ast}}(\mathbb{R}^n)\, : \, \nabla u\in L^p(\mathbb{R}^n)\rbrace \, ,
$$

then $u(x)=\mathcal U_{\lambda,x_0} (x)$ for some $\lambda>0$ and $x_0\in\RR^n$.}

\

We mention that this result has been recently generalized in \cite{CFR} in the anisotropic setting (see also \cite{ERSV}) and in convex cones of $\mathbb{R}^n$ (see also \cite{LPT}) and in \cite{catmon,FMM,muso} in the Riemannian setting (see Appendix \ref{Riem} for a more detailed discussion).

\

As far as we know trying to prove the same result without the assumption that $u$ has finite energy is an open and challenging problem for $p\neq 2$; in this paper we deal with this problem obtaining a classification  result of all positive weak solutions of \eqref{eq-pl} in dimensions $n=2,3$ for $\frac{n}{2}<p<2$, while for different values of $n$ and $p$ we require that $u$ satisfies suitable conditions at infinity (which are weaker than the finite energy assumption).

\

Before stating our results we recall the variational nature of the critical $p-$Laplace equation \eqref{eq-pl}: the energy associated to \eqref{eq-pl} is given by
$$
E_{\mathbb{R}^n}(u):=\frac 1p\int_{\mathbb{R}^n} |\nabla u|^p + \frac{1}{p^*}\int_{\mathbb{R}^n} u^{p^*}\, ,
$$
indeed it is well-known that the Euler-Lagrange equation associated to this energy functional is \eqref{eq-pl}.  We define the energy on a general open set $\Omega\subseteq\RR^n$ and we split it as follows:
$$
E_\Omega(u)=E^{\text{kin}}_\Omega(u)+E^{\text{pot}}_\Omega(u):=\frac 1p\int_\Omega |\nabla u|^p + \frac{1}{p^*}\int_\Omega u^{p^*}.
$$
\

With these notations our first theorem is a rigidity result under a growth assumption of the energy on annuli, indeed we have the following:

\begin{theorem}\label{t-e1} Let $u$ be a positive weak solution of equation \eqref{eq-pl}.
If one of the following holds
\begin{itemize}
\item[(i)] $1<p\leq \frac{2n}{n+1}$ and
$$
E_{A_R}(u)=O\left(R^{\frac{n}{n-1}}\right),\quad\text{or}
$$

\item[(ii)] $\frac{2n}{n+1}<p<2$ and
$$
E_{A_R}(u)=O\left(R^{\frac{(2-p)(n-p)}{2(p-1)^2}}\right),\quad\text{or}
$$
\item[(iii)] $p>2$ and $
u(x)\leq C|x|^{\alpha}$ as $|x|\to\infty$ for some $\alpha\geq 0$ and
$$
E_{A_R}(u)=O\left(R^k\right)\quad\text{for some}\quad
k<\frac{2(n-p)}{2+(n-3)p}-\alpha\frac{p[n(p-2)+p]}{(n-p)[2+(n-3)p]},
$$
\end{itemize}
where $A_R:=B_{2R}\setminus B_{R}$, then $u(x)=\mathcal U_{\lambda,x_0} (x)$ for some $\lambda>0$ and $x_0\in\RR^n$.
\end{theorem}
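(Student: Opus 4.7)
The plan is to run a Bochner-type integral identity with cutoffs, in the spirit of recent rigidity results for critical quasilinear equations. The target is a ``conformally traceless'' tensor $T$ built from $u$, $\nabla u$ and $\nabla^2 u$ which vanishes exactly on the Aubin--Talenti bubbles, together with a pointwise differential inequality of the schematic form
\[
\Div\bigl(|\nabla u|^{p-2}\,\nabla \Phi(u,|\nabla u|)\bigr)\;\geq\;c\,|T|^2\,w(u,|\nabla u|),
\]
with weights $w,c>0$, obtained by applying a suitable linearized operator to a power of $|\nabla u|$ and substituting $\Delta_p u=-u^{p^*-1}$. A natural auxiliary variable to track is $v:=u^{-p/(n-p)}$, which on $\mathcal U_{\lambda,x_0}$ reduces to the explicit expression $c_1+c_2|x-x_0|^{p/(p-1)}$; once $T\equiv 0$ is established, integrating the resulting overdetermined system for $v$ forces $u=\mathcal U_{\lambda,x_0}$ for some $\lambda>0$ and $x_0\in\mathbb R^n$.

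To exploit the identity I would multiply by a cutoff $\eta_R\in C_c^\infty(B_{2R})$ with $\eta_R\equiv 1$ on $B_R$ and $|\nabla \eta_R|\leq C/R$, integrate by parts, and estimate the resulting cutoff error on $A_R$ by H\"older's inequality, splitting the integrand into products of powers of $|\nabla u|^p$ and $u^{p^*}$ (that is, of the two pieces of $E_{A_R}(u)$). The three cases in the statement correspond to three different allocations of these exponents. For $1<p\leq 2n/(n+1)$, the natural Sobolev--H\"older pairing between the gradient and potential terms yields the threshold $R^{n/(n-1)}$; for $2n/(n+1)<p<2$ one has to shift more weight onto the gradient term, producing the sharper rate $R^{(2-p)(n-p)/[2(p-1)^2]}$; for $p>2$ the factor $|\nabla u|^{p-2}$ is no longer a priori bounded near zeros of the gradient, which forces the extra polynomial upper bound $u(x)\leq C|x|^\alpha$ to absorb the degeneracy and explains the $\alpha$-dependent exponent in (iii). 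In each case the assumed growth of $E_{A_R}(u)$ is exactly what is required to make the cutoff error vanish as $R\to\infty$, so that $c\int_{\mathbb R^n}|T|^2\,w=0$ and hence $T\equiv 0$.

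The main obstacle is the derivation of the pointwise Bochner identity: for $p\neq 2$ the equation is anisotropic and there is no direct analog of Obata's identity, so one has to choose the exponent of $|\nabla u|$, the auxiliary function $\Phi$, and the weight $w$ simultaneously so that all cross terms regroup into a single nonnegative quadratic $|T|^2$ whose vanishing characterizes precisely the Aubin--Talenti profiles. A secondary difficulty is to justify the integration by parts rigorously in view of the $C^{1,\alpha}$ regularity of $u$ and of the possible critical set $\{\nabla u=0\}$; this is standard in the $p$-Laplace literature and can be handled by a $(|\nabla u|^2+\varepsilon)$-regularization together with the local $L^\infty$ bounds and Harnack-type estimates guaranteed by the positivity of $u$.
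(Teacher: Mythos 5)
Your overall plan is in the same spirit as the paper's proof: both proceed by establishing an integral inequality for a traceless tensor built from $\mathbf v = u^{-n(p-1)/(n-p)}|\nabla u|^{p-2}\nabla u$, bounding the cutoff error on $A_R$ via H\"older between the two pieces of $E_{A_R}$, and concluding $u=\mathcal U_{\lambda,x_0}$ once the tensor vanishes. However, two of the three cases in your proposal have concrete gaps.

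For case (ii), when $\frac{2n}{n+1}<p<2$, the exponent $\frac{(2-p)n-p}{n-p}$ in the error term is strictly \emph{negative}, so one cannot split the integrand purely into powers of $u^{p^*}$ and $|\nabla u|^p$. The paper handles this by first invoking the universal lower bound $u(x)\geq A|x|^{-\frac{n-p}{p-1}}$ for $p$-superharmonic functions outside a compact set (Lemma~\ref{l-ser}, from Serrin--Zou), which converts the negative power of $u$ into an explicit power of $R$ on $A_R$; your ``shift weight onto the gradient term'' heuristic does not by itself account for this, and without the lower bound the H\"older exponents do not close.

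For case (iii) your explanation of why the extra hypothesis $u(x)\leq C|x|^\alpha$ enters is not correct, and as a result the argument would not go through. For $p>2$ the factor $|\nabla u|^{p-2}$ is bounded (it degenerates) near critical points, not unbounded; the actual obstruction is that the error term contains $|\nabla u|^{2(p-1)}$ with exponent $2(p-1)>p$, so it cannot be controlled by the kinetic energy $\int_{A_R}|\nabla u|^p$ via H\"older. The paper resolves this with a new, sharp pointwise gradient estimate (Proposition~\ref{p-ge}),
$$
|\nabla u|\leq C\Bigl(\sup_{B_{2R}}u^{\frac{1}{n-p}+\eps}+R^{-\eps\frac{n-p}{p-1}}\Bigr)u^{\frac{n-1}{n-p}-\eps},
$$
which is where the polynomial growth of $u$ enters and which allows trading excess powers of $|\nabla u|$ for powers of $u$; this auxiliary estimate, proved by a Li--Yau-type maximum-principle argument applied to the linearized $p$-Laplacian, is the essential extra ingredient missing from your proposal. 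Finally, your proposed differential \emph{identity} in divergence form should, for $1<p<2$, really be an \emph{inequality} at the level of the truncated quantities, recovered in the limit of the $(|\nabla u|^2+\varepsilon)$-regularization via a Fatou-type argument; you allude to this but should be aware that for $1<p<2$ equality is lost and only the one-sided bound survives, which is still enough.
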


In particular this result implies the classification of solutions $u\in \mathcal{D}^{1,p}(\mathbb{R}^n)$, simply by observing that
$$
u\in\mathcal{D}^{1,p}(\mathbb{R}^n) \quad \Longleftrightarrow \quad E_{\RR^n}(u)<\infty\, .
$$
Here one also has to recall that positive solutions $u\in \mathcal{D}^{1,p}(\mathbb{R}^n)$ have the following behavior:
$$
u(x)\leq \frac{C}{1+|x|^{\frac{n-p}{p-1}}}\qquad\text{and}\qquad |\nabla u(x)|\leq \frac{C}{1+|x|^{\frac{n-1}{p-1}}}\, ,
$$
for all $x\in\RR^n$ and some $C>0$, as it was shown in \cite[Theorem 1.1]{Vetois}. We explicitly note that we are not using these estimates in our proofs .

Note also that, if $u$ is a positive weak solution of equation \eqref{eq-pl}, by Lemmas \ref{l-dario} and \ref{l-2} for every $\alpha>0$ one has
$$
E_{A_R}^{\text{pot}}(u)=O(R^\alpha) \quad \Longleftrightarrow\quad E_{A_R}(u)=O(R^\alpha) \quad \Longleftrightarrow\quad E_{A_R}^{\text{kin}}(u)=O(R^\alpha).
$$
See Remark \ref{rem-bo} for the proof.

\

For suitable choices of $n$ and $p$ we can show rigidity results without any further assumptions on the solution.

\begin{theorem}\label{t-se0} Let $u$ be a positive weak solution of equation \eqref{eq-pl}. If one of the following holds
\begin{itemize}
\item[(i)] $n=2$ and $1<p<2$, or
\item[(ii)] $n=3$ and $\frac 32<p<2$,
\end{itemize}
then $u(x)=\mathcal U_{\lambda,x_0} (x)$ for some $\lambda>0$ and $x_0\in\RR^n$.
\end{theorem}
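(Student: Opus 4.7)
The strategy is to derive unconditionally the annular energy growth required by Theorem \ref{t-e1}, and then invoke that theorem. By the Remark following Theorem \ref{t-e1}, it suffices to control $E^{\mathrm{pot}}_{A_R}(u) = (1/p^*)\int_{A_R} u^{p^*}$.

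First I would establish a universal $L^{p^*-p}$ bound: for every positive weak solution of \eqref{eq-pl} and every $R\ge 1$,
\[
\int_{B_R} u^{p^*-p}\,dx \ \le\ C(n,p)\,R^{n-p}.
\]
This follows by testing \eqref{eq-pl} against the admissible function $u^{-(p-1)}\eta^{q}$ (with $q\ge p$ and $\eta$ a standard cutoff of $B_R$; admissibility holds because $u>0$ is locally bounded away from zero by the weak Harnack inequality), integrating by parts, and applying Young's inequality to absorb the gradient term $(p-1)\int u^{-p}|\nabla u|^p \eta^q$. Hölder interpolation against $|B_R|$ then yields $\int_{B_R} u^q \le C R^{n-q(n-p)/p}$ for every $q\in[1,p^*-p]$, and in particular $R^{-p}\int_{B_R} u^p \le C$; this controls the error term appearing in the Caccioppoli equivalence from Lemmas \ref{l-dario}--\ref{l-2}, so that kinetic and potential annular energies differ only by a constant.

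Next I would push the $L^q$ range by iteration. Using the test function $u^{-\beta}\eta^q$ with $\beta\in(0,p-1)$ produces a self-similar inequality $\int_{A_R} u^r \le C R^{-p+n(p^*-p)/r}\bigl(\int_{A_{R'}} u^r\bigr)^{1-(p^*-p)/r}$, which can be iterated (using the polynomial a priori bound from Step 1) to yield $\int_{B_R} u^r \le C(r) R^{n-r(n-p)/p}$ for every $r\in(p^*-p,p^*-1)$. Combining this family of bounds with the Sobolev inequality applied to $u\eta$ and the equivalence of $E^{\mathrm{pot}}$ and $E^{\mathrm{kin}}$ on annuli, one extracts an annular energy bound
\[
E_{A_R}(u)\ \le\ C\,R^{\gamma(n,p)}
\]
with explicit $\gamma(n,p)$. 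A direct computation shows that $\gamma(n,p)$ does not exceed the threshold $n/(n-1)$ of Theorem \ref{t-e1}(i) or $(2-p)(n-p)/(2(p-1)^{2})$ of Theorem \ref{t-e1}(ii) precisely in the two regimes stated; Theorem \ref{t-e1} then applies and gives $u=\mathcal U_{\lambda,x_0}$.

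\emph{Main obstacle.} The key difficulty is that the universal $L^{p^*-p}$ bound of Step 1 falls strictly short of the $L^{p^*}$ information required to control $E^{\mathrm{pot}}_{A_R}$ by elementary means: interpolation closes only between $L^p$ and $L^{p^*-p}$, both strictly below $L^{p^*}$, and the critical exponent places the Moser iteration on its borderline, with the Caccioppoli coefficient in the test $u^{-\beta}\eta^q$ degenerating as $\beta\to 0$ (i.e.\ as $r\to p^*-1$). The restrictions $n\le 3$ and $p>n/2$ are precisely those under which the iterated exponent $\gamma(n,p)$ stays below the threshold of Theorem \ref{t-e1}; outside these regimes the bound overshoots, and one is forced back to the energy or pointwise growth assumptions appearing in Theorem \ref{t-e1} itself.
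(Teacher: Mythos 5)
Your Steps 1 and 2 (testing with $u^{t+1}\eta^l$, $t<-1$, to get $\int_{B_R}u^{p^*+t}+\int_{B_R}u^t|\nabla u|^p\le CR^\beta$) essentially reinvent Lemma \ref{l-t} of the paper, and the exponents you obtain match. The gap is in the final step, where you claim that from these $L^q$ bounds (available only for $q<p^*-1$) together with Sobolev and the Caccioppoli equivalence $E^{\mathrm{pot}}_{A_R}\asymp E^{\mathrm{kin}}_{A_R}$ ``one extracts an annular energy bound'' $E_{A_R}(u)\le CR^{\gamma(n,p)}$. This does not close: the equivalence of Remark \ref{rem-bo} expresses $E^{\mathrm{kin}}_{A_R}$ in terms of $E^{\mathrm{pot}}_{A_R}$ on a slightly larger annulus and vice versa, so one needs an \emph{independent} bound on $\int u^{p^*}$ or $\int|\nabla u|^p$; the test-function estimates reach $\int u^q$ only for $q<p^*-1$ and $\int u^t|\nabla u|^p$ only for $t<-1$, both strictly short of the unweighted quantities, and the Sobolev inequality $\int_{B_R}u^{p^*}\lesssim(\int_{B_{2R}}|\nabla u|^p+R^{-p}\int u^p)^{p^*/p}$ is super-linear, so iterating Lemma \ref{l-dario} against it only produces the circular bound $F(R)\lesssim F(4R)^{p^*/p}$. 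In short, you cannot reach $E_{A_R}(u)$ from below, and the ``main obstacle'' you honestly flag is in fact fatal to the route as written; it is not an artifact that the restrictions $n\le3$, $p>n/2$ remove.

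The paper sidesteps this entirely: it never bounds $E_{A_R}(u)$. Instead it takes Corollary \ref{c-fond} with $l=2$ and bounds the right-hand side $\int u^{\frac{(2-p)n-p}{n-p}}|\nabla u|^{2(p-1)}|\nabla\eta|^2$ directly. The crucial point is that for $1<p<2$ this integrand carries a \emph{sub-critical} power of $u$ (namely $\frac{(2-p)n-p}{n-p}<p^*$) and a sub-$p$ power of $|\nabla u|$ (namely $2(p-1)<p$), so after inserting the weight $u^{\bar t}$ with $\bar t=-\frac{p}{n-p}$ and applying H\"older with exponents $\frac{p}{2(p-1)}$ and $\frac{p}{2-p}$, both factors are exactly of the form controlled by Lemma \ref{l-t}. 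This yields $\int u^{\frac{(2-p)n-p}{n-p}}|\nabla u|^{2(p-1)}|\nabla\eta|^2\le CR^{\beta-2}$, with $\beta=2$ when $n=2$ ($\bar t+p<0$) and $\beta=1$ when $n=3$, $\tfrac32<p<2$ ($\bar t+p>0$), so the right-hand side is bounded (resp.\ vanishing), and one concludes from the second inequality in Corollary \ref{c-fond}. If you want to salvage your write-up, you should abandon the detour through Theorem \ref{t-e1} and instead feed your Lemma-\ref{l-t}-type estimates straight into Corollary \ref{c-fond} after the H\"older split, as the paper does.
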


In our last two classification theorems, where we consider general $n$ and $p$, we rely on conditions on the behavior of the solution at infinity, which are much weaker than all the results already available in the literature. For $1<p<2$ we have he following:

\begin{theorem}\label{t-se1} Let $u$ be a positive weak solution of equation \eqref{eq-pl} with
$$
u(x)\leq C|x|^{\alpha}\qquad\text{as } |x|\to\infty,
$$
for some
$$\alpha<\bar{\alpha}:=\tfrac{(3p-n)(n-p)}{p(n-2p)}.$$
If one of the following holds
\begin{itemize}
\item[(i)] $n=3$ and $1<p\leq \frac32$, or
\item[(ii)] $n\geq 4$ and $1<p<2$,
\end{itemize}
then $u(x)=\mathcal U_{\lambda,x_0} (x)$ for some $\lambda>0$ and $x_0\in\RR^n$.
\end{theorem}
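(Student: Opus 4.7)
The plan is to reduce Theorem \ref{t-se1} to Theorem \ref{t-e1}. The hypotheses on $(n,p)$ are arranged so that $(n,p)$ lies in the range of case (i) or case (ii) of Theorem \ref{t-e1}: for $n=3$, $1<p\leq 3/2$ one has $p\leq 2n/(n+1)=3/2$, putting us in case (i), while for $n\geq 4$, $1<p<2$ one lands in case (i) if $p\leq 2n/(n+1)$ and case (ii) otherwise. Thus it suffices to translate the pointwise bound $u(x)\leq C|x|^{\alpha}$ into an annular energy estimate $E_{A_R}(u)=O(R^k)$ with $k$ below the relevant threshold.

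The first, crude approach uses $\int_{A_R}u^{p^*}\leq CR^{\alpha p^*+n}$, so that $E^{\mathrm{pot}}_{A_R}(u)=O(R^{\alpha p^*+n})$ and, by the equivalence of $E^{\mathrm{pot}}_{A_R}$, $E^{\mathrm{kin}}_{A_R}$, and $E_{A_R}$ on annuli (Lemmas \ref{l-dario} and \ref{l-2}), also $E_{A_R}(u)=O(R^{\alpha p^*+n})$. However, matching $\alpha p^*+n$ against the Theorem \ref{t-e1} thresholds $n/(n-1)$ or $(2-p)(n-p)/(2(p-1)^2)$ forces $\alpha$ to be considerably smaller than $\bar\alpha$, so a refinement is needed. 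I would combine the pointwise bound with the interior Tolksdorf--DiBenedetto gradient estimate for the $p$-Laplacian, applied in balls of radius comparable to $|x|$ (so that scaling absorbs the right-hand side of \eqref{eq-pl}), obtaining $|\nabla u(x)|\leq C|x|^{\alpha-1}$ for large $|x|$. Then testing \eqref{eq-pl} against weighted cutoffs of the form $u^{\beta+1}\eta^{q}$ with $\eta$ supported on $A_R$, and balancing all terms using both the pointwise bound on $u$ and the gradient bound, yields an improved annular estimate $E_{A_R}(u)=O(R^{k(\alpha,n,p)})$ in which $k(\alpha,n,p)$ is a precise function of $\alpha$; the condition $\alpha<\bar\alpha$ is exactly what places $k(\alpha,n,p)$ below the Theorem \ref{t-e1} threshold in the relevant case.

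The main obstacle is identifying the correct weights $\beta,q$ (or equivalently executing the right Moser-type bootstrap) that reproduces the sharp threshold $\bar\alpha=(3p-n)(n-p)/(p(n-2p))$ rather than the weaker bound given by the crude estimate. The factor $n-2p$ in the denominator of $\bar\alpha$ signals a degeneration of the argument at $p=n/2$: the restriction $p\leq 3/2$ for $n=3$ keeps us strictly below this critical exponent, while for $n\geq 4$ the whole interval $1<p<2$ is admissible since $n/2\geq 2$. Once the refined annular bound is established, Theorem \ref{t-e1} applies and gives $u(x)=\mathcal U_{\lambda,x_0}(x)$.
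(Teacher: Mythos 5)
Your reduction to Theorem \ref{t-e1} cannot produce the sharp threshold $\bar\alpha$, and in fact it fails already at the level of orders of magnitude. The energy growth hypotheses in Theorem \ref{t-e1} are $E_{A_R}(u)=O(R^{n/(n-1)})$ or $E_{A_R}(u)=O\bigl(R^{(2-p)(n-p)/(2(p-1)^2)}\bigr)$, which for the relevant $(n,p)$ are of order $O(R^1)$ or smaller. On the other hand, $\bar\alpha=(3p-n)(n-p)/(p(n-2p))$ is strictly positive throughout the ranges where Theorem \ref{t-se1} is used (for $n=3$, $1<p\le 3/2$ one has $3p-n>0$ and $n-2p\ge 0$, and similarly the constant is large positive for $n\ge 4$, $p>n/3$). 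If $u(x)$ is actually comparable to $|x|^\alpha$ on $A_R$ for some positive $\alpha<\bar\alpha$, then $\int_{A_R}u^{p^*}\gtrsim R^{\alpha p^*+n}$, which is incomparably larger than the Theorem \ref{t-e1} threshold (e.g. for $n=4$, $p=1.8$ one has $\bar\alpha\approx 4.3$ and $\alpha p^*+n\approx 18$ versus a threshold $\approx 0.34$). No test-function bootstrap or gradient estimate can repair this, because the annular energy genuinely \emph{is} that large; the problem is structural, not a matter of finding better weights $\beta,q$.

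The paper's actual argument does not pass through an unweighted annular energy bound at all. Instead it estimates the right-hand side of Corollary \ref{c-fond},
$$\int u^{\frac{(2-p)n-p}{n-p}}|\nabla u|^{2(p-1)}|\nabla\eta|^2,$$
directly, by first extracting a factor $\sup_{A_R}u^\gamma\le CR^{\alpha\gamma}$, then applying H\"older with exponents $p/(2(p-1))$ and $p/(2-p)$, and then invoking Lemma \ref{l-t}: a \emph{weighted} Caccioppoli estimate $\int_{B_R}u^{\frac{np+t(n-p)}{n-p}}+\int_{B_R}u^t|\nabla u|^p\le CR^\beta$ valid for $t<-1$, in which the negative power of $u$ (made possible by the lower bound of Lemma \ref{l-ser}) supplies exactly the decay that the unweighted energy lacks. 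The parameter $t=\bar t=-\tfrac{p}{n-p}-\gamma$ is chosen to make the two H\"older factors match the two integrals controlled by Lemma \ref{l-t}, and then optimizing over $\gamma$ (pushing $\gamma$ to $\tfrac{n-2p}{n-p}$, the edge of admissibility $\bar t<-1$) yields precisely the threshold $\bar\alpha$. Also note that the interior gradient estimate plays no role here: Corollary \ref{c-ge} is invoked in this paper only for $p>2$ (Theorems \ref{t-e1}(iii) and \ref{t-se2}), and the exponent it gives is $|\nabla u|\lesssim |x|^{(\frac{1}{n-p}+\eps)\alpha}u^{\frac{n-1}{n-p}-\eps}$, not $|x|^{\alpha-1}$.

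To summarize the gap: you need to abandon the plan of feeding a plain annular energy bound into Theorem \ref{t-e1}, and instead prove a weighted integral estimate of the form of Lemma \ref{l-t} (testing the equation with $u^{t+1}\eta^l$ for $t<-1$), then plug it, together with $\sup_{A_R}u^\gamma\le CR^{\alpha\gamma}$, into the H\"older-split right-hand side of Corollary \ref{c-fond}. The degeneration of $\bar\alpha$ at $p=n/2$ that you correctly observed is indeed tied to this mechanism: it is the limit of admissibility $\bar t\to -1$ as $\gamma\to\frac{n-2p}{n-p}\to 0$.
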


For $2<p<n$, on the other hand, we have:

\begin{theorem}\label{t-se2} Let $u$ be a positive weak solution of equation \eqref{eq-pl} with
$$
u(x)\leq C|x|^{\alpha}\qquad\text{as } |x|\to\infty.
$$
Let
$$
\hat{\alpha}:=\tfrac{2(n-p)}{p(p-2)},\qquad\check{\alpha}:=\tfrac{(n-p)^2}{(p-2)(p-1)},\qquad \bar{\alpha}:=\tfrac{(3p-n)(n-p)}{p(n-2p)}, \qquad \tilde{\alpha}:=\tfrac{(3p-n)(n-p)}{p(n-3p+2)}.
$$
Assume that one of the following holds
\begin{itemize}
\item[(i)] $n=3$, and $2<p<3$ and  $\alpha<\check{\alpha}$;
\item[(ii)] $n=4$, and  $$2<p<\check{p}\quad\text{and}\quad\alpha<\hat{\alpha},$$ or $$\check{p}\leq p <4\quad\text{and}\quad\alpha<\check{\alpha};$$
\item[(iii)] $n=5$ or $n=6$, and $$2<p<\tfrac{n+2}{3}\quad\text{and}\quad\alpha<\bar{\alpha},$$ or $$\tfrac{n+2}{3}\leq p <\check{p}\quad\text{and}\quad\alpha<\hat{\alpha},$$ or
$$\check{p}\leq p <n\quad\text{and}\quad\alpha<\check{\alpha};$$
\item[(iv)] $n\geq 7$ and $$2<p\leq\tfrac{n}{3}\quad\text{and}\quad\alpha<\tilde{\alpha},$$ or $$\tfrac{n}{3}<p<\tfrac{n+2}{3}\quad\text{and}\quad\alpha<\bar{\alpha},$$ or $$\tfrac{n+2}{3}\leq p <\check{p}\quad\text{and}\quad\alpha<\hat{\alpha},$$ or
$$\check{p}\leq p <n\quad\text{and}\quad\alpha<\check{\alpha},$$
\end{itemize}
where $\check{p}=\tfrac{n-2+\sqrt{n^2-4n+12}}{2}$. Then $u(x)=\mathcal U_{\lambda,x_0} (x)$ for some $\lambda>0$ and $x_0\in\RR^n$.
\end{theorem}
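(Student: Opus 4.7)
The plan is to reduce Theorem~\ref{t-se2} to Theorem~\ref{t-e1}(iii). Given the assumption $u(x)\leq C|x|^\alpha$ at infinity, I aim to establish an energy growth bound $E_{A_R}(u)=O(R^k)$ with $k$ satisfying the sharp threshold
$$
k<\frac{2(n-p)}{2+(n-3)p}-\alpha_{\mathrm{eff}}\cdot\frac{p[n(p-2)+p]}{(n-p)[2+(n-3)p]},\qquad\alpha_{\mathrm{eff}}:=\max(\alpha,0),
$$
so that Theorem~\ref{t-e1}(iii) applies and $u$ must coincide with an Aubin--Talenti bubble \eqref{tal}. When $\alpha<0$, one simply takes $\alpha_{\mathrm{eff}}=0$ and discards the $\alpha$-term.

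The technical heart is a family of annular Caccioppoli-type estimates. Testing the equation $-\Delta_p u=u^{p^*-1}$ against $u^s\phi^q$, for $s\geq 0$ and a smooth radial cutoff $\phi$ equal to $1$ on $A_R$ and supported in $B_{3R}\setminus B_{R/2}$ with $|\nabla\phi|\lesssim 1/R$, followed by Young's inequality on the resulting boundary term, produces bounds that control quantities of the form $\int u^{s+p^*}\phi^q$ and $\int u^{s-1}|\nabla u|^p\phi^q$ by an $R^{-p}$-weighted integral of a lower power of $u$ on the enlarged annulus. Feeding the pointwise hypothesis $u\leq C|x|^\alpha$ into that boundary integral, and invoking the equivalence $E_{A_R}^{\mathrm{pot}}\asymp E_{A_R}\asymp E_{A_R}^{\mathrm{kin}}$ stated in the remark after Theorem~\ref{t-e1}, each admissible choice of $(s,q)$ yields an energy growth exponent $k=k(n,p,\alpha;s,q)$.

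Optimizing over $(s,q)$ and over the number of iterations of this procedure produces the four thresholds $\hat{\alpha},\check{\alpha},\bar{\alpha},\tilde{\alpha}$. Each corresponds to a different balance point at which one term in Young's inequality saturates and ceases to be reabsorbable; the case analysis (i)--(iv) then identifies, for each region of the $(n,p)$-parameter space (delimited by the sign changes of $3p-n$, $n-2p$, and $n-3p+2$), which estimate produces the sharpest admissible $k$ and hence the correct upper bound on $\alpha$.

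The main obstacle is precisely this sharp algebraic optimization at the critical exponent, which is the structural reason for the intricate conditions and four-way split in the statement: the four thresholds reflect qualitatively different obstructions, so the proof is necessarily case-by-case, and the delicate work lies in tracking exponents through iterated Caccioppoli inequalities and absorbing boundary terms without slack while still beating the Theorem~\ref{t-e1}(iii) threshold. Everything else (constructing cutoffs, Young's inequalities, and applying the remark to pass freely between $E^{\mathrm{pot}}$, $E^{\mathrm{kin}}$, and $E$) is routine.
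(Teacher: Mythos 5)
Your proposed reduction to Theorem~\ref{t-e1}(iii) does not work quantitatively, and this is not a matter of a more careful optimization: the energy growth you can extract from the pointwise bound $u\leq C|x|^\alpha$ via Caccioppoli-type estimates is simply too large. Taking for concreteness the exponent you can actually obtain from Lemma~\ref{l-t} together with the pointwise bound (and the equivalence of $E$, $E^{\text{pot}}$, $E^{\text{kin}}$), the best attainable is $E_{A_R}(u)=O\bigl(R^{\frac{n-p}{p}+\alpha+\eps}\bigr)$ for $\alpha\geq 0$ (take $t\to -1^-$ in Lemma~\ref{l-t}, then factor out $u^{-t}\leq C R^{-t\alpha}$ on the annulus). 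On the other hand, for $\tfrac{n+2}{3}\leq p<\check p$ and $\alpha$ close to $\hat\alpha=\tfrac{2(n-p)}{p(p-2)}$, the admissible exponent in Theorem~\ref{t-e1}(iii) becomes
$$
\frac{2(n-p)}{2+(n-3)p}-\hat\alpha\,\frac{p[n(p-2)+p]}{(n-p)[2+(n-3)p]}=-\,\frac{2p(p-1)}{(p-2)[2+(n-3)p]}<0,
$$
while the attainable growth $\frac{n-p}{p}+\hat\alpha=\frac{n-p}{p-2}>0$. For $n=5$, $p=3$, $\alpha\to\frac43$ one would need $k<-\tfrac32$ but can only achieve $k\approx 2$. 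Even at $\alpha=0$ (your $\alpha_{\text{eff}}=0$ case) the achievable growth $R^{\frac{n-p}{p}}$ already exceeds the Theorem~\ref{t-e1}(iii) threshold $R^{\frac{2(n-p)}{2+(n-3)p}}$ for $n\geq 5$. There is also a structural issue with the test functions you propose: testing with $u^{s}\phi^{q}$, $s\geq 0$, produces \emph{two} nonnegative volume terms and thus does not give a one-sided bound; in Lemma~\ref{l-t} the exponent is $t+1<0$ precisely so that both main terms can be placed on the same side.

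The paper's actual proof never passes through Theorem~\ref{t-e1}(iii). Instead it works directly with the key estimate in Corollary~\ref{c-fond}, and the decisive ingredient --- one that is entirely absent from your proposal --- is the sharp pointwise gradient bound of Proposition~\ref{p-ge}/Corollary~\ref{c-ge}, $|\nabla u|\leq C\bigl(|x|^{(\frac1{n-p}+\eps)\alpha}+|x|^{-\eps\frac{n-p}{p-1}}\bigr)u^{\frac{n-1}{n-p}-\eps}$, proved via a nonlinear Bochner/maximum-principle argument. The right-hand side of Corollary~\ref{c-fond} is then handled by splitting $|\nabla u|^{2(p-1)}=|\nabla u|^\theta\cdot|\nabla u|^{2(p-1)-\theta}$ with $p-2<\theta<2(p-1)$, converting the $|\nabla u|^\theta$ factor into a power of $u$ via the gradient estimate, applying H\"older against the weighted quantity $u^{t}|\nabla u|^{p}$, and invoking Lemma~\ref{l-t}. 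The four thresholds $\hat\alpha,\check\alpha,\bar\alpha,\tilde\alpha$ come from the optimization in $\theta,\gamma,t,\eps$ in this specific scheme and from the sign of $\bar t+p$, not from any reduction to part~(iii) of Theorem~\ref{t-e1}. Without the gradient estimate your approach cannot produce a single power of $R$ small enough.
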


In particular we have the following classifications:

\begin{corollary}\label{c-1.5} Let $u$ be a positive bounded weak solution of equation \eqref{eq-pl} on $\RR^n$, with $n\leq 6$, or $n\geq 7$ and $p>\frac n3$.  Then $u(x)=\mathcal U_{\lambda,x_0} (x)$ for some $\lambda>0$ and $x_0\in\RR^n$.
\end{corollary}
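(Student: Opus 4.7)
The plan is to observe that since $u$ is bounded one has $u(x)\le C=C|x|^{0}$, so the growth hypothesis $u(x)\le C|x|^{\alpha}$ appearing in Theorems~\ref{t-se1} and~\ref{t-se2} is satisfied with $\alpha=0$. The required strict inequality $\alpha<\bar{\alpha}$ (and analogously for $\hat{\alpha},\check{\alpha},\tilde{\alpha}$) then reduces to the requirement that the relevant threshold be strictly positive. From this point the proof becomes a case analysis over the admissible pairs $(n,p)$, invoking the appropriate previous theorem in each region; the classical result of Caffarelli--Gidas--Spruck \cite{CGS} takes care of $p=2$.

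The bookkeeping runs as follows. For $n=2$, Theorem~\ref{t-se0}(i) applies for all $1<p<2$ unconditionally. For $n=3$, Theorem~\ref{t-se0}(ii) covers $3/2<p<2$, Theorem~\ref{t-se1}(i) covers $1<p\le 3/2$ because
$$\bar{\alpha}=\tfrac{(3p-3)(3-p)}{p(3-2p)}>0$$
throughout this range (tending to $+\infty$ at $p=3/2$), while Theorem~\ref{t-se2}(i) handles $2<p<3$ via $\check{\alpha}=(n-p)^2/((p-1)(p-2))>0$. For $4\le n\le 6$ and for $n\ge 7$ with $p>n/3$, I would check the signs of the four thresholds:
$$\bar{\alpha}>0\iff n/3<p<n/2,\qquad \hat{\alpha}>0\iff p>2,$$
$$\check{\alpha}>0\iff p>2,\qquad \tilde{\alpha}>0\iff n/3<p<(n+2)/3.$$
The subcases of Theorems~\ref{t-se1}(ii) and \ref{t-se2}(ii)--(iv) have been partitioned precisely so that inside each subcase the chosen threshold is positive, whence $\alpha=0$ lies strictly below it and the matching theorem applies.

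The main difficulty is purely organizational: verifying that every $(n,p)$ allowed by the corollary genuinely falls inside one of the subcases of Theorems~\ref{t-e1}--\ref{t-se2}, that the threshold indicated there is the one that is positive, and that the breakpoints $n/3$, $(n+2)/3$, $\check p$ used in Theorem~\ref{t-se2}(iv) align with the condition $p>n/3$ required for $n\ge 7$. Once this partition has been matched no further analytic argument is required, because in each subregion the corollary follows immediately from the invoked classification theorem applied with $\alpha=0$.
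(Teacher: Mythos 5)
Your strategy---note that boundedness gives $u(x)\le C|x|^{\alpha}$ with $\alpha=0$, then check that $0$ falls strictly below the relevant threshold in each subcase of Theorems \ref{t-se0}, \ref{t-se1} and \ref{t-se2}---is the natural one, and is evidently what the paper has in mind (the corollary is stated without proof). However the verification you describe as ``purely organizational'' has a genuine hole in the range $4\le n\le 6$, $1<p<2$.

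In that range the only available result is Theorem \ref{t-se1}(ii), which needs $\alpha<\bar\alpha=\frac{(3p-n)(n-p)}{p(n-2p)}$. You correctly record that $\bar\alpha>0\iff n/3<p<n/2$; but for $n=4,5,6$ the interval $1<p\le n/3$ is nonempty and sits inside the window $1<p<2$ where Theorem \ref{t-se1}(ii) is invoked (for $n=6$ this is \emph{all} of $1<p<2$, since $n/3=2$). There $\bar\alpha\le 0$, so $\alpha=0$ fails the strict inequality $\alpha<\bar\alpha$ and Theorem \ref{t-se1} does not apply. Boundedness also does not rescue the argument via Theorem \ref{t-e1}: from $\sup u<\infty$ one only gets $E_{A_R}(u)=O(R^{n})$, which is far weaker than the $O\left(R^{n/(n-1)}\right)$ required by Theorem \ref{t-e1}(i), and Theorem \ref{t-se0} is confined to $n\le 3$. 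So your claim that ``the subcases\ldots have been partitioned precisely so that inside each subcase the chosen threshold is positive'' is false for Theorem \ref{t-se1}(ii) when $n=4,5,6$. The same observation shows the paper's remark just after the corollary (that the limiting exponents are strictly positive under the corollary's hypotheses) does not hold either, unless one reads the hypothesis as demanding $p>n/3$ for every $n$, not only for $n\ge 7$. Under that restrictive reading your argument is complete; under the literal reading the proof has a gap for $4\le n\le 6$, $1<p\le n/3$, which a simple table check cannot close.
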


\begin{corollary} Let $u$ be a positive weak solution of equation \eqref{eq-pl} on $\RR^n$ with
$$u(x)\leq C|x|^{-\frac{n-p}{p}}.$$ Then $u(x)=\mathcal U_{\lambda,x_0} (x)$ for some $\lambda>0$ and $x_0\in\RR^n$.
\end{corollary}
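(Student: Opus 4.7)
The plan is to reduce the corollary to the preceding classification theorems by setting $\alpha := -\tfrac{n-p}{p}$, which is exactly the exponent in the hypothesis $u(x) \leq C|x|^{-(n-p)/p}$. Since $1 < p < n$, one has $\alpha < 0$, and the task reduces to verifying that $\alpha$ is strictly smaller than each of the thresholds $\bar\alpha$, $\hat\alpha$, $\check\alpha$, $\tilde\alpha$ appearing in the theorems, across every subcase in $(n,p)$ they cover. I would split the argument into two regimes: for $1 < p < 2$, invoke Theorem~\ref{t-se0} when $n=2$, or when $n=3$ and $p > 3/2$, and Theorem~\ref{t-se1} otherwise; for $2 < p < n$, invoke Theorem~\ref{t-se2}, matching the subcase according to the ranges of $n$ and $p$ listed there. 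The borderline value $p = 2$ is covered by the classical Caffarelli--Gidas--Spruck classification.

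The verifications of the strict inequalities are elementary. When $p > 2$, the quantities $\hat\alpha = \tfrac{2(n-p)}{p(p-2)}$ and $\check\alpha = \tfrac{(n-p)^2}{(p-2)(p-1)}$ are manifestly positive, so $\alpha < \hat\alpha$ and $\alpha < \check\alpha$ hold trivially. For $\bar\alpha = \tfrac{(3p-n)(n-p)}{p(n-2p)}$, one observes that in every subcase of Theorems~\ref{t-se1} and~\ref{t-se2} where it is invoked one has $n - 2p > 0$; multiplying through by $\tfrac{p(n-2p)}{n-p} > 0$ reduces $\alpha < \bar\alpha$ to $-(n-2p) < 3p - n$, i.e.\ $p > 0$. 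For $\tilde\alpha = \tfrac{(3p-n)(n-p)}{p(n-3p+2)}$, which appears only for $n \geq 7$ and $2 < p \leq n/3$, one has $n - 3p + 2 > 0$ and the analogous reduction yields $-2 < 0$.

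I do not anticipate a substantive obstacle: the corollary is a direct specialization of the main theorems to the borderline decay rate $|x|^{-(n-p)/p}$, at which $u^{p^*}$ fails only logarithmically to be integrable on $\RR^n$. The one minor subtlety is the transition value $p = n/2$ in Theorem~\ref{t-se1}(i) (i.e.\ $n=3$, $p=3/2$), at which $n - 2p = 0$ makes $\bar\alpha$ formally $+\infty$ and the condition $\alpha < \bar\alpha$ is vacuously satisfied; this case is therefore handled without any further work.
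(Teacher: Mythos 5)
Your proposal is correct and takes essentially the same approach the paper intends: specialize $\alpha = -\tfrac{n-p}{p}$ in Theorems~\ref{t-se0}--\ref{t-se2} and verify the strict inequalities $\alpha < \hat\alpha, \check\alpha, \bar\alpha, \tilde\alpha$ in the relevant ranges. This is exactly what the authors flag in the remark following Corollary~\ref{c-1.5}, and your algebraic reductions ($\alpha<\bar\alpha\Leftrightarrow p>0$ when $n-2p>0$; $\alpha<\tilde\alpha\Leftrightarrow -2<0$; positivity of $\hat\alpha,\check\alpha$) are right, as is your handling of the degenerate case $n=3,\,p=\tfrac32$ and the inclusion of $p=2$ via Caffarelli--Gidas--Spruck, which the theorems as stated formally exclude.
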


We stress the fact that all the limiting exponents in Theorems \ref{t-se1} and \ref{t-se2} are strictly larger than $-\frac{n-p}{p}$ in the ranges of $n$ and $p$ where they are used. Actually they are strictly positive under the hypothesis of Corollary \ref{c-1.5}. Note that the exponent $-\frac{n-p}{p}$ is the threshold decay in order for a radial solution to have finite energy.

\

As an auxiliary result, which may have an independent interest, we prove a gradient estimate for positive solutions to \eqref{eq-pl} which is instrumental in the proofs of Theorems \ref{t-e1} and \ref{t-se2} in the case $2<p<n$.
\begin{proposition}\label{p-ge} Let $u$ be a positive weak solution of equation \eqref{eq-pl} with $1<p<n$. Then, for every $0<\eps<\frac{p-1}{n-p}$ it holds
$$
|\nabla u|\leq C\left(\sup_{B_{2R}(x_0)} u^{\frac{1}{n-p}+\eps}+R^{-\eps\frac{n-p}{p-1}}\right) u^{\frac{n-1}{n-p}-\eps}\qquad\text{on }B_{R}(x_0)
$$
for some $C=C(n,p,\eps)>0$, for every $R>0$ and every $x_0\in\RR^n$.
\end{proposition}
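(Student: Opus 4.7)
The plan is to reduce the estimate to a local gradient bound for the transformed function $v:=u^{-p/(n-p)}$, which satisfies a $p$-Laplace-type equation with non-negative right-hand side, and then to obtain that bound via a Moser-type iteration. Since $u$ is positive, $v$ is well defined and positive, and a direct computation starting from $\Delta_{p}u+u^{p^{*}-1}=0$ yields, in the weak sense,
\[
v\,\Delta_{p}v \;=\; \tfrac{n(p-1)}{p}\,|\nabla v|^{p} \;+\; \Bigl(\tfrac{p}{n-p}\Bigr)^{p-1};
\]
in particular $v$ is $p$-subharmonic and the source on the right is a strictly positive constant. Using $u=v^{-(n-p)/p}$ and the pointwise identity $|\nabla u|=\tfrac{n-p}{p}\,u^{n/(n-p)}|\nabla v|$, the statement of Proposition~\ref{p-ge} is equivalent to the local gradient bound
\[
|\nabla v(x)|\;\leq\;C\bigl[(\inf_{B_{2R}(x_{0})} v)^{-\theta} + R^{-\gamma}\bigr]\,v(x)^{\theta},\qquad x\in B_{R}(x_{0}),
\]
with $\theta=\tfrac{1}{p}+\tfrac{\eps(n-p)}{p}$ and $\gamma=\tfrac{\eps(n-p)}{p-1}$; the hypothesis $\eps<(p-1)/(n-p)$ is exactly equivalent to $\theta,\gamma<1$, which is the condition that will make the iteration close.

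To produce this bound, I would derive, for every large exponent $q$, a Caccioppoli-type inequality for the weighted quantity $|\nabla v|^{q}v^{-q\theta}$ by weakly differentiating the transformed equation in a coordinate direction $e_{k}$, testing the resulting linear equation for $\partial_{k}v$ against $\phi^{p}|\nabla v|^{q-p}v^{-(q-p)\theta}\partial_{k}v$ (with $\phi$ a standard cutoff, $\phi\equiv 1$ on $B_{R}(x_{0})$ and supported in $B_{2R}(x_{0})$), summing over $k$, and absorbing cross-terms via Young's inequality. The two dangerous contributions along this procedure are the first-order term $|\nabla v|^{p}/v$ intrinsic to the equation for $v$ and the term $v^{-1}$ coming from the constant source; the strict inequality $\theta<1$ is used precisely to close Young's inequality with a small parameter when these terms are transferred to the left-hand side.

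A Moser iteration along a sequence of exponents $q_{k}\nearrow\infty$ on a family of nested balls $B_{R_{k}}\searrow B_{R}$ then promotes this family of Caccioppoli estimates to the desired pointwise $L^{\infty}$ bound for $|\nabla v|$: the factor $(\inf v)^{-\theta}$ arises from the accumulation of weights of the form $v^{-1}$ along the iteration, while the factor $R^{-\gamma}$ comes from the cutoff derivatives $|\nabla\phi|\leq C/R$. The initial $L^{q_{0}}$-integrability of $|\nabla v|$ required to start the iteration is supplied by the standard Caccioppoli estimate (testing the equation for $v$ against $\phi^{p}$) combined with the local $C^{1,\alpha}$-regularity of $u$, which transfers to $v$. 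Translating the resulting bound via $|\nabla u|=\tfrac{n-p}{p}\,u^{n/(n-p)}|\nabla v|$ yields the statement of the proposition.

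The main obstacle will be the precise exponent bookkeeping along the iteration. The tight balance of powers dictated by the critical Sobolev exponent $p^{*}$ is what forces the upper threshold $\eps<(p-1)/(n-p)$: beyond this value the coefficients produced by Young's inequality on the $|\nabla v|^{p}/v$ and $v^{-1}$ terms can no longer be absorbed with a small parameter, so the Moser scheme fails to converge.
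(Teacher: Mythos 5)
Your initial reduction is correct: setting $v=u^{-p/(n-p)}$ one indeed obtains $v\,\Delta_p v=\tfrac{n(p-1)}{p}|\nabla v|^p+\left(\tfrac{p}{n-p}\right)^{p-1}$, and the identity $|\nabla u|=\tfrac{n-p}{p}u^{n/(n-p)}|\nabla v|$ does convert the proposition into a gradient bound for $v$ with $\theta=\tfrac{1+\eps(n-p)}{p}$ and $\gamma=\tfrac{\eps(n-p)}{p-1}$, matching the condition $\theta<1$ with $\eps<\tfrac{p-1}{n-p}$. The strategy you then sketch (a Moser iteration on the differentiated equation) is, however, genuinely different from the paper's. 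The paper does not iterate; it takes $f=u^a$ with $a=-\tfrac{p-1}{n-p}+\eps$, applies the sharp $p$-Bochner inequality of Valtorta (Lemma~\ref{pboch}) to produce a pointwise differential inequality for $P_f(|\nabla f|^p)$ with a positive leading term $\lambda f^{-2}|\nabla f|^{2p}$ (the choice of $a$, hence of $\eps$, is exactly what makes $\lambda>0$ and keeps $a<0$), and then runs a Bernstein/Cheng--Yau maximum-principle argument on $\phi|\nabla f|^p$ with the cutoffs \eqref{cut-off}. That route is short and closed; yours would, if carried out in full, be closer in spirit to Wang--Zhang \cite{wanzha}.

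The problem is that your argument, as written, has concrete gaps. First, the factor $R^{-\gamma}$ with $\gamma=\tfrac{\eps(n-p)}{p-1}$ cannot come ``from the cutoff derivatives $|\nabla\phi|\le C/R$'' alone: cutoffs only produce integer powers of $R^{-1}$. In the paper this exponent appears because Lemma~\ref{l-ser} gives $u\geq A|x|^{-(n-p)/(p-1)}$, hence $f=u^a\leq CR^{1-\eps(n-p)/(p-1)}$ on $B_{2R}$, and that a priori lower bound is essential to convert the $R^{-1}$'s into $R^{-\eps(n-p)/(p-1)}$'s; nothing in your outline invokes it, and without it the Moser iteration cannot produce the claimed $R^{-\gamma}$. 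Second, the core of your argument --- weakly differentiating the degenerate equation, absorbing the critical-order term $|\nabla v|^p/v$ and the source $v^{-1}$, and closing the iteration precisely at $\theta<1$ --- is entirely deferred: you assert the bookkeeping works but do not show it, and for $p\neq 2$ differentiating the equation and testing against $\phi^p|\nabla v|^{q-p}v^{-(q-p)\theta}\partial_k v$ raises regularity and degeneracy issues (e.g.\ $u\in W^{2,2}_{loc}$ is available only for $1<p\leq 2$; for $p>2$ one only has \eqref{eq-r3}--\eqref{eq-r5}) that need to be addressed. As it stands the proposal is a plausible programme, not a proof.
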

This estimate is sharp for the positive solutions $\mathcal U_{\lambda,x_0} (x)$.

\

Most of the available classification results for the critical $p-$Laplace equation are based on a careful application of the moving plane technique. Interesting exceptions can be found in \cite{catmon, CFR, FMM} where the authors, exploiting integral estimates obtained through test functions arguments, prove the classification via the vanishing of a suitable traceless tensor field depending on the solutions and their derivatives. Similar estimates have been used by Gidas and Spruck \cite{GS} and Serrin and Zou \cite{serzou} in the subcritical case. In this paper we adopt a similar approach; the starting point in the proof of our classification results is the key integral estimate in Corollary \ref{c-fond} which in turn is obtained adapting arguments in \cite{serzou} to the critical case.

\

One of the nice features of our approach is that it can be quite easily extended to the Riemannian setting, as it was shown in the case $p=2$ in \cite{catmon}. We review all the needed steps in order to adapt our arguments to the case of a Riemannian manifold $(M^n,g)$ in the Appendix \ref{Riem}, where we sketch the proof of the following:

\begin{theorem} Let $u$ be a positive weak solution with regularity \eqref{eq-r1}--\eqref{eq-r4} on a complete non-compact Riemannian manifold $(M^n,g)$ such that
\begin{itemize}
\item[(i)] $\mathrm{Ric}\geq 0$, if $1<p<2$,  or
\item[(ii)] $\mathrm{Sec}\geq 0$, if $2<p<n$.
\end{itemize}
Then, under the hypotheses of Theorems \ref{t-e1}-\ref{t-se0}-\ref{t-se1}-\ref{t-se2},  $(M^n,g)$ is isometric to $\RR^n$ with the Euclidean metric and $u(x)=\mathcal U_{\lambda,x_0} (x)$ for some $\lambda>0$ and $x_0\in\RR^n$.
\end{theorem}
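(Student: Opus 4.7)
The plan is to follow the blueprint of the Euclidean proofs and adapt each step to $(M^n,g)$, as was done for $p=2$ in \cite{catmon}. The starting point is to derive a Riemannian analog of the fundamental integral estimate underlying Corollary \ref{c-fond}. In the Euclidean case this estimate originates from a $p$-Bochner-type identity applied to a suitable power of $u$ combined with equation \eqref{eq-pl}. On a manifold, the $p$-Bochner formula carries an extra curvature term: for $1<p<2$ it reduces to a multiple of $|\nabla u|^{p-2}\mathrm{Ric}(\nabla u,\nabla u)$, which is nonnegative under $\mathrm{Ric}\geq 0$; for $p>2$, the refined Kato-type inequality used to bound $|\mathrm{Hess}\,u|^2$ from below in terms of the squared norm of the relevant traceless tensor introduces sectional curvatures of $2$-planes tangent to level sets of $u$, which explains why the stronger assumption $\mathrm{Sec}\geq 0$ is required in that range.

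First, assuming the regularity \eqref{eq-r1}--\eqref{eq-r4}, I would derive the pointwise Riemannian $p$-Bochner identity for our solution, regularizing near $\{\nabla u=0\}$ by a standard approximation procedure as in \cite{catmon}. The outcome is an inequality of the form $\int_M |T|^2\,\eta^{2q}\,dv_g\leq \mathcal{R}(\eta)$, where $T$ is a traceless tensor whose vanishing characterizes the Aubin-Talenti profiles (the same tensor that appears in the Euclidean proofs) and $\mathcal{R}(\eta)$ depends only on lower-order norms of $u$, $\nabla u$ and the cut-off $\eta$; the curvature term is retained with the favorable sign given by the hypothesis. Next, I would take $\eta=\eta_R$ to be a radial cut-off supported in the geodesic annulus $A_R=B_{2R}(o)\setminus B_R(o)$ around a fixed $o\in M$, and let $R\to\infty$. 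Using the Laplacian and volume comparison theorems valid under the curvature hypothesis, together with the gradient estimate of Proposition \ref{p-ge} (whose proof also transfers to this setting when $p>2$), $\mathcal{R}(\eta_R)$ is controlled by the Riemannian energies $E_{A_R}(u)$ and by $\sup_{A_R}u^{\alpha}$, exactly as in the proofs of Theorems \ref{t-e1}, \ref{t-se0}, \ref{t-se1} and \ref{t-se2}. Under each of the corresponding sets of hypotheses, the remainder vanishes in the limit, forcing $T\equiv 0$ identically on $M$.

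Finally, once $T\equiv 0$, the equality cases in the $p$-Bochner and Kato inequalities force $|\nabla u|^{p-2}\mathrm{Ric}(\nabla u,\nabla u)\equiv 0$ (respectively $|\nabla u|^{p-2}$ times the relevant sectional curvatures vanishes identically) and simultaneously pin down the algebraic form of $\mathrm{Hess}\,u$: a multiple of the metric on level sets of $u$ plus a prescribed scalar in the radial direction. This implies that $u$ is a function of the distance from some point $x_0\in M$, that its level sets are totally umbilical geodesic spheres with explicitly prescribed mean curvature, and that $u$ has the Aubin-Talenti profile in the radial variable. Combining the vanishing of the radial Ricci (or sectional) curvatures with Bishop-Gromov volume comparison in its rigidity form yields that geodesic spheres about $x_0$ have Euclidean volume growth, which identifies $(M,g)$ isometrically with $(\RR^n,g_{\mathrm{eucl}})$; then $u(x)=\mathcal U_{\lambda,x_0}(x)$ follows after transporting coordinates. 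The main obstacle is the first step: choosing the algebraic form of $T$ so that, after the Riemannian $p$-Bochner computation, the extra curvature term appears with the \emph{correct} sign -- involving only $\mathrm{Ric}$ when $1<p<2$ but unavoidably $\mathrm{Sec}$ when $p>2$ -- while simultaneously justifying the integration by parts at the critical set of $u$ via a careful approximation.
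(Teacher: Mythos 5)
Your overall blueprint is the right one and matches the paper's at the strategic level: adapt the key integral estimate to $(M^n,g)$, keeping the curvature term with a favourable sign; carry over the a~priori lemmas and the gradient estimate of Proposition~\ref{p-ge} using comparison geometry; run the cutoff arguments of Theorems~\ref{t-e1}--\ref{t-se2} to force the traceless tensor to vanish; then extract rigidity. However, there is a genuine misunderstanding about \emph{where} the two curvature hypotheses are used, which is not a cosmetic issue but affects the structure of the argument.

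The Riemannian analogue of Corollary~\ref{c-fond} (inequality~\eqref{eq-ap2} in the appendix) carries the extra term $\int u^{\frac{(n-1)p}{n-p}}\mathrm{Ric}(\mathbf{v},\mathbf{v})\eta^l$ in \emph{all} ranges of $p$, and this term comes from the Ricci commutation identity in $\diver(\mathbf{v}\cdot\nabla\mathbf{v})=\nabla_i\nabla_j\mathbf{v}_j\,\mathbf{v}_i-\mathrm{Ric}(\mathbf{v},\mathbf{v})+|\nabla\mathbf{v}|^2$ inside the Serrin--Zou machinery (not from a $p$-Bochner formula applied to a power of $u$, nor from a refined Kato inequality). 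For the key integral estimate, $\mathrm{Ric}\geq 0$ alone is enough, for every $1<p<n$. Your claim that the Kato-type lower bound on $|\mathrm{Hess}\,u|^2$ \emph{introduces sectional curvatures} when $p>2$ has no counterpart in the actual proof and would be misleading to carry forward. The true reason $\mathrm{Sec}\geq 0$ appears only in the range $2<p<n$ is that the gradient estimate of Proposition~\ref{p-ge}, needed only there, requires a cutoff $\phi$ satisfying $|\nabla f|^2\Delta\phi+(p-2)\nabla^2\phi(\nabla f,\nabla f)\geq -\tfrac{C}{R^2}\phi^{1-2\delta}|\nabla f|^2$: when $p>2$ the Hessian term $(p-2)\nabla^2\phi(\nabla f,\nabla f)$ must be controlled from below, which forces one to use the \emph{Hessian} comparison $\nabla^2 r\leq\tfrac{n-1}{r}\,g$, valid under $\mathrm{Sec}\geq 0$ (Laplacian comparison under $\mathrm{Ric}\geq 0$ does not suffice). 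The $p$-Bochner inequality in Lemma~\ref{pboch} picks up only a $\mathrm{Ric}(\nabla f,\nabla f)$ term, again needing only $\mathrm{Ric}\geq 0$.

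On the final rigidity step your route (umbilical level sets, prescribed radial profile, Bishop--Gromov rigidity) is a genuinely different path from the paper's. The paper deduces from $\mathring{\mathbf{V}}\equiv 0$, $\mathrm{Ric}(\mathbf{v},\mathbf{v})\equiv 0$ that $\diver\mathbf{v}$ is locally constant, hence $\mathbf{v}$ is a homothetic vector field, and then invokes Kobayashi's theorem to conclude $\Omega_0$ is locally Euclidean. This sidesteps the need to argue directly about level-set geometry and avoids identifying geodesic spheres before the flat structure is established; your approach can be made to work (it mirrors the $p=2$ argument in \cite{catmon}), but it is harder to make rigorous in the quasilinear case because the relation between $\mathbf{v}$ and $\mathrm{Hess}\,u$ is nonlinear and degenerate at critical points, whereas the homothetic vector field argument uses only $\mathbf{v}$ itself.
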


\

The paper is organized as follows: in Section \ref{Pre} we collect some useful preliminary results that will be needed in the proof of our main theorems, in particular in Corollary \ref{c-fond} we prove a key integral estimate that will be the starting point in the proofs of the main results; in Section \ref{gr-est} we prove the sharp gradient estimate in Proposition \ref{p-ge}; in Section \ref{s-ec} we give the proof of Theorem \ref{t-e1}; in Section \ref{s-ci} we show Theorems \ref{t-se1} and \ref{t-se2}. Finally, in Appendix \ref{Riem} we discuss the generalizations to the Riemannian setting.

\

\section{Preliminaries}\label{Pre}

\subsection{The key integral estimate} In this part we collect some well-known facts about equation \eqref{eq-pl}: the definition of weak solutions and of sub/super-solutions to \eqref{eq-pl} and  the regularity theory related to the $p-$Laplace equation. Moreover we show the main integral estimate that we are going to use to prove our rigidity results.

\begin{definition}\label{deb}
  A weak solution of \eqref{eq-pl} is a function $u\in W^{1,p}_{\text{loc}}(\RR^n)\cap L^{\infty}_{\text{loc}}(\RR^n)$ such that
  $$
  \int_{\mathbb{R}^n}|\nabla u|^{p-2}\langle\nabla u,\nabla\psi\rangle-\int_{\mathbb{R}^n}u^{p^*-1}\psi=0\qquad \forall \psi\in W^{1,p}_0(\RR^n).
  $$
  where $W^{1,p}_0(\RR^n)$ denotes the set of compactly supported functions of $W^{1,p}(\RR^n)$.

Moreover,  a weak subsolution of \eqref{eq-pl} is a function $u\in W^{1,p}_{\text{loc}}(\RR^n)\cap L^{\infty}_{\text{loc}}(\RR^n)$ such that
  $$
  \int_{\mathbb{R}^n}|\nabla u|^{p-2}\langle\nabla u,\nabla\psi\rangle-\int_{\mathbb{R}^n}u^{p^*-1}\psi\leq 0\qquad \forall \psi \in W^{1,p}_0(\RR^n)\,,
  $$
  such that $\psi$ is non-negative.  Finally,  $u\in W^{1,p}_{\text{loc}}(\RR^n)\cap L^{\infty}_{\text{loc}}(\RR^n)$ is a weak supersolution of \eqref{eq-pl} if the opposite inequality holds.
\end{definition}
Thanks to the regularity theory in \cite{ACF} (see Theorems 1.1 and 1.4) we have that any weak solution of \eqref{eq-pl} satisfies
\begin{equation}\label{eq-r3}
u\in W^{2,2}_{\text{loc}}(\RR^n\setminus\Omega_{cr})\cap C^{1,\alpha}_{\text{loc}}(\RR^n),
\end{equation}
for some $\alpha\in(0,1)$ and, in addition,
\begin{equation}\label{eq-r2}
\vert\nabla u\vert^{p-2} \nabla u\in W^{1,2}_{\text{loc}}(\RR^n),
\end{equation}
and 
\begin{equation}\label{eq-r4}
\vert\nabla u\vert^{p-2} \nabla^2 u\in L^{2}_{\text{loc}}(\RR^n\setminus\Omega_{cr}),
\end{equation}
for all $1<p<n$, where
$$
\Omega_{cr}=\{x\in\RR^n\,|\,\nabla u(x)=0\}.
$$
We note that by a bootstrap argument, any weak solution is actually $C^{\infty}$ on $\Omega_{cr}^c$. Since $u>0$ in $\RR^n$ it is easy to see that $\Omega_{cr}$ has zero measure (see e.g. \cite{ACF}). In particular (see also equation $(10)$ in \cite{ACF})
\begin{equation}\label{eq-r5}
\vert\nabla u\vert^{p-2} \nabla^2 u\in L^{2}_{\text{loc}}(\RR^n).
\end{equation}
Moreover,  if $1<p\leq 2$ then 
\begin{equation}\label{eq-r1}
u\in W^{2,2}_{\text{loc}}(\RR^n)\cap C^{1,\alpha}_{\text{loc}}(\RR^n),
\end{equation}
for some $\alpha\in(0,1)$. 
%, with $1<p\leq 2$, satisfies
%\begin{equation}\label{eq-r1}
%u\in W^{2,2}_{\text{loc}}(\RR^n)\cap C^{1,\alpha}_{\text{loc}}(\RR^n)
%\end{equation}
%for some $\alpha\in(0,1)$ and
%\begin{equation}\label{eq-r2}
%\vert\nabla u\vert^{p-2} \nabla u\in W^{1,2}_{\text{loc}}(\RR^n).
%\end{equation}
%On the other hand,  for $p>2$,
%\begin{equation}\label{eq-r3}
%u\in W^{2,2}_{\text{loc}}(\RR^n\setminus\Omega_{cr})\cap C^{1,\alpha}_{\text{loc}}(\RR^n)
%\end{equation}
%for some $\alpha\in(0,1)$ and
%\begin{equation}\label{eq-r4}
%\vert\nabla u\vert^{p-2} \nabla^2 u\in L^{2}_{\text{loc}}(\RR^n\setminus\Omega_{cr}),
%\end{equation}

\

Here, also in order to help the reader, we adopt the same notation as in \cite[Chapter II]{serzou}. If $u$ is a positive solution of \eqref{eq-pl} we define the vector fields
\begin{equation}\label{vectorfield1}
  \mathbf{u}:=|\nabla u|^{p-2}\nabla u,\qquad\mathbf{v}:=u^{-\frac{n(p-1)}{n-p}}|\nabla u|^{p-2}\nabla u.
\end{equation}
%The tensors $\nabla\mathbf{u}$ and $\nabla\mathbf{v}$ are well defined and actually smooth only on $\Omega_{cr}^c$ when $1<p<2$, while they are well defined and continuous on the whole of $\RR^n$ if $p\geq2$. 
It is then convenient to redefine $\nabla\mathbf{u}$ and $\nabla\mathbf{v}$ to be $0$ on $\Omega_{cr}$, for $1<p<n$. Then we set
\begin{equation}\label{vectorfield3}
\mathbf{U}:=\begin{cases}
             \nabla \mathbf{u}, & \mbox{in }\Omega_{cr}^c  \\
             0, & \mbox{in }\Omega_{cr},
           \end{cases}
           \qquad
\mathbf{V}:=\begin{cases}
             \nabla \mathbf{v}, & \mbox{in }\Omega_{cr}^c  \\
             0, & \mbox{in }\Omega_{cr}.
           \end{cases}
\end{equation}
%Note that of course $\nabla\mathbf{u}\equiv\mathbf{U}$ and $\nabla\mathbf{v}\equiv\mathbf{V}$ on $\RR^n$ if $p>2$.
Moreover, we recall the definition of the \textit{traceless} tensor
\begin{equation*}
\mathring{\mathbf{V}}:=\mathbf{V}-\frac{\operatorname{tr}\mathbf{V}}{n} \operatorname{Id}_n
\end{equation*}
where $\operatorname{Id}_n$ is the identity tensor.

\

Using this notation, we have the following fundamental estimate.

\begin{proposition}\label{p-mest}
  Let $1<p<n$, $p\neq2$, and $u$ be a positive weak solution of equation \eqref{eq-pl}. Then for every $0\leq \phi\in C^\infty_0(\mathbb{R}^n)$ we have
  $$
  \int u^{\frac{(n-1)p}{n-p}}|\mathring{\mathbf{V}}|^2\phi\leq-\int u^{\frac{(n-1)p}{n-p}}\langle \mathbf{v}\cdot\mathring{\mathbf{V}},\nabla\phi\rangle
  $$
\end{proposition}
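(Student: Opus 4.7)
The strategy is to adapt a Serrin--Zou-type divergence identity to the critical exponent. I would work pointwise on the open set $\Omega_{cr}^c=\{\nabla u\neq 0\}$, where bootstrap gives $u\in C^\infty$, and justify the global integration by parts afterwards using the regularity \eqref{eq-r2}--\eqref{eq-r5}. As a preliminary, differentiating $\mathbf{v}=u^{-\beta}|\nabla u|^{p-2}\nabla u$ with $\beta=\tfrac{n(p-1)}{n-p}$, and using $\operatorname{div}\mathbf{u}=-u^{p^*-1}$, one obtains the explicit expression
$$\operatorname{tr}\mathbf{V}=\operatorname{div}\mathbf{v}=-\beta\,u^{-\beta-1}|\nabla u|^p-u^{\frac{p}{n-p}},$$
which controls the trace part of $\mathbf{V}$.

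The core step is to compute the divergence of the vector field $W$ with components $W_i:=u^a\,v_j\,\mathring V_{ij}$, where $a=\tfrac{(n-1)p}{n-p}$ matches the exponent in the statement. Expanding on $\Omega_{cr}^c$,
$$\operatorname{div} W\;=\;u^a V_{ij}\mathring V_{ij}\;+\;a\,u^{a-1}u_i\,v_j\mathring V_{ij}\;+\;u^a v_j\,\partial_i\mathring V_{ij}.$$
The first term equals $u^a|\mathring{\mathbf{V}}|^2$ identically, since $V_{ij}\mathring V_{ij}=|\mathbf{V}|^2-\tfrac{(\operatorname{tr}\mathbf{V})^2}{n}=|\mathring{\mathbf{V}}|^2$. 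For the third term I would use $\sum_i \partial_i\mathring V_{ij}=\Delta v_j-\tfrac{1}{n}\partial_j\operatorname{tr}\mathbf{V}$, then differentiate the preliminary formula for $\operatorname{tr}\mathbf{V}$ together with the PDE itself, so as to rewrite $v_j\partial_i\mathring V_{ij}$ as an algebraic expression in $u$, $\nabla u$, and $\nabla^2 u$. The middle term, exploiting $\mathbf{v}\parallel\nabla u$, similarly reduces to such an algebraic expression. The exponent $a=\tfrac{(n-1)p}{n-p}$ is then the unique one for which all these contributions conspire so that $\operatorname{div} W\ge u^a|\mathring{\mathbf{V}}|^2$, the gap being a non-negative residual of Cauchy--Schwarz type that can be discarded.

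Integrating this pointwise inequality against $\phi\in C^\infty_0(\mathbb{R}^n)$, $\phi\ge 0$, and integrating by parts yields
$$\int u^a|\mathring{\mathbf{V}}|^2\,\phi\;\le\;\int\phi\,\operatorname{div} W\;=\;-\int u^a\langle \mathbf{v}\cdot\mathring{\mathbf{V}},\nabla\phi\rangle,$$
which is the claim. The global integration by parts is justified by inserting a cutoff $\eta_\delta$ supported in $\{|\nabla u|>\delta\}$ and sending $\delta\to 0$, using $|\Omega_{cr}|=0$ together with the $L^2_{\text{loc}}$ bound \eqref{eq-r5} on $|\nabla u|^{p-2}\nabla^2 u$ to dominate the integrand and to kill the boundary contribution on $\{|\nabla u|=\delta\}$ via coarea. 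The main obstacle is the pointwise algebraic identity: derivatives of $|\nabla u|^{p-2}$ and of the PDE produce a cascade of terms, and recognizing the residual as non-negative (a Cauchy--Schwarz defect, rather than a direct sum of squares) relies on the specific critical scaling given by $a$, $\beta$, and $p^*-1$; this is the technical heart of the proof.
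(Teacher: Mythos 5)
Your overall strategy --- compute the divergence of $W_i = u^a \mathbf{v}_j \mathring{\mathbf V}_{ij}$ with $a = \tfrac{(n-1)p}{n-p}$, show that the $u^a|\mathring{\mathbf V}|^2$ term dominates, then integrate against $\phi$ --- is exactly the Serrin--Zou device the paper invokes, so the skeleton is right. But two of your claims are off in a way that matters.

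First, the "non-negative Cauchy--Schwarz defect that can be discarded" is not what happens. With the critical exponent choices $a=-\tfrac{n(p-1)}{n-p}$ (inside $\mathbf v$), $b=\tfrac{(n-1)p}{n-p}$ (your weight), and $q=p^*-1$, the auxiliary term $\psi$ in the Serrin--Zou identity (their formula (6.16)) vanishes \emph{identically}, because the combination $A+q\hat A=B=C=0$. For $p>2$ the statement then holds with \emph{equality}, not with an inequality coming from dropping a pointwise non-negative residual. If you try to push the identity by hand without verifying this exact cancellation, you have no reason to believe the residual has a sign at all --- some of the extra terms are genuinely indefinite, and only the precise algebra at the critical exponent saves the day. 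You would need to actually exhibit the vanishing, not appeal to Cauchy--Schwarz.

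Second, and more seriously, your treatment of the critical set is insufficient for $1<p<2$. You propose a cutoff $\eta_\delta$ supported in $\{|\nabla u|>\delta\}$ and a coarea argument to kill the boundary contribution on $\{|\nabla u|=\delta\}$. But for $1<p<2$ the chain-rule expansion of $\nabla\mathbf v$ produces factors of order $|\nabla u|^{p-3}$, and $\mathring{\mathbf V}$ is only known to lie in $L^2_{\mathrm{loc}}$ after the extension-by-zero; there is no estimate ensuring that the surface integral over the (a priori irregular) level set $\{|\nabla u|=\delta\}$ goes to zero. The paper avoids this entirely: following Serrin--Zou, it truncates $|\nabla u|$ by $|\nabla u|_\varepsilon=\max\{|\nabla u|,\varepsilon\}$ \emph{inside} the vector fields $\mathbf u_\varepsilon,\mathbf v_\varepsilon$, obtains an identity with a controlled error $O(\varepsilon^{2(p-1)})$, and passes to the limit using the regularity \eqref{eq-r1}--\eqref{eq-r2}, monotone convergence on $\Omega_\varepsilon^c$, and a $\liminf\geq 0$ bound for the contribution of $\Omega_\varepsilon=\{0<|\nabla u|<\varepsilon\}$. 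That limiting procedure --- not a pointwise sign of a residual --- is what produces the inequality $\leq$ in the statement when $p<2$. Your proposal does not distinguish $p>2$ from $1<p<2$, and as written the $1<p<2$ case has a real gap at the integration-by-parts step.
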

\begin{remark}
We remark that the result holds with the equality sign when $p>2$. It also holds when $p=2$ with the equality sign, replacing $\mathbf{V}$ with $\nabla\mathbf{v}$ (see Proposition 6.2 in \cite{serzou}).
\end{remark}
\begin{proof}
  In case $p>2$ the result follows from Proposition 6.2 in \cite{serzou}. When $1<p<2$, due to problems of regularity, one can first use Proposition 7.1 in \cite{serzou}, where a truncation of $|\nabla u|$ is introduced in order to deal with the critical set of $u$, and then pass to the limit as the relevant parameter $\varepsilon$ tends to $0$ to conclude. We provide here some details to help the reader.

We start with the case $p>2$.   Formula (6.16) in Proposition 6.2 in \cite{serzou} with 
\begin{equation}\label{1}
a=-\frac{n(p-1)}{n-p},\qquad b=\frac{p(n-1)}{n-p},\qquad q=\frac{np}{n-p}-1
\end{equation}
reads as
$$
\int (u^b I+\psi)\phi=-\int \langle\mathbf{\omega},\nabla\phi\rangle.
$$
Here
\begin{equation*}
\mathbf{\omega}:=u^{\frac{(n-1)p}{n-p}}\left(\mathbf{v}\cdot\mathbf{V}-\frac{1}{n}\mathbf{v}\operatorname{tr} \mathbf{V}\right)=u^{\frac{(n-1)p}{n-p}}\mathbf{v}\cdot\mathring{\mathbf{V}}.
\end{equation*}
The expression $\mathbf{v}\cdot\mathbf{V}$ is interpreted as the vector with components $(\mathbf{v}\cdot \mathbf{V})_i=\mathbf{v}_j\mathbf{V}_{ij}$, for $i=1,\ldots,n$, where we use the Einstein convention of summation over repeated indices. Moreover
  $$
  I:=|\mathbf{V}|^2-\frac{1}{n}\operatorname{tr}(\mathbf{V})^2\equiv|\mathring{\mathbf{V}}|^2
  $$
  and
  $$
  \psi:=u^{b+2a+q-1}\left(A+q\hat{A}\right)|\nabla u|^p+Bu^{b+2a-2}|\nabla u|^{2p}+C\operatorname{div}\left(u^{b+2a-1}|\nabla u|^p\mathbf{u}\right)\equiv0,
$$
since by our choices of $a,b,q$ one easily computes $\left(A+q\hat{A}\right)=B=C=0$, using their explicit expressions in \cite{serzou}. Some comments are in order: thanks to the regularity of $u$ stated in \eqref{eq-r2} and \eqref{eq-r5} we have
\begin{equation}\label{bor}
\mathbf{u}\in W^{1,2}_{loc}(\RR^n),\quad u^b \mathbf{v}\in W^{1,2}_{loc}(\RR^n),\quad \diver{\mathbf{v}}\in W^{1,2}_{loc}(\RR^n).
\end{equation}
In particular Lemma 6.4 and Lemma 6.5 in \cite{serzou} apply and formula (6.17) in \cite{serzou} holds. The rest of the proof of Proposition 6.2 in \cite{serzou} goes through using \eqref{bor}. Thus the result immediately follows, with the equality sign.

\medskip

  The case $1<p<2$ is more involved. For any fixed $\varepsilon\in(0,1)$, following \cite{serzou} we set
  $$
  |\nabla u|_\eps=\max\{|\nabla u|,\eps\},\qquad \mathbf{u}_\eps=|\nabla u|_\eps^{p-2}\nabla u,\qquad \mathbf{v}_\eps=u^{-\frac{n(p-1)}{n-p}}\mathbf{u}_\eps,\qquad
  \mathbf{V}_\eps=\nabla \mathbf{v}_\eps.
  $$
We observe that \eqref{bor} holds also in this case, therefore the proof of Proposition 7.1 in \cite{serzou} goes through and we obtain
  \begin{equation}\label{2}
  \int (u^b I_\eps+\psi_\eps)\phi=-\int \langle\mathbf{\omega}_\eps,\nabla\phi\rangle+O(\eps^{2(p-1)}).
  \end{equation}
  with
 \begin{align*}
 I_\eps&=\operatorname{tr}(\mathbf{V}_\eps\mathbf{V})-\frac{1}{n}\operatorname{tr}\mathbf{V}_\eps\operatorname{tr}\mathbf{V}\\
 \omega_\eps&=\left(\mathbf{v}_\eps\cdot\mathbf{V}-\frac{1}{n}\mathbf{v}_\eps\operatorname{tr} \mathbf{V}\right)u^{\frac{(n-1)p}{n-p}}=\mathbf{v}_\eps\cdot\mathring{\mathbf{V}}u^{\frac{(n-1)p}{n-p}}\\
 \psi_\eps&=u^{b+2a+q-1}\left(\bar{A}+q\hat{A}\right)\Gamma_\eps+\tilde{A}u^{b+2a-1}|\nabla u|^p\diver\mathbf{u}_\eps\\
 &\quad+Bu^{b+2a-2}|\nabla u|^{p}\Gamma_\eps+C\operatorname{div}\left(u^{b+2a-1}|\nabla u|^p\mathbf{u}_\eps\right),
\end{align*}
  where
  $$
  \Gamma_\eps=\langle\mathbf{u}_\eps,\nabla u\rangle=|\nabla u|_\eps^{p-2}|\nabla u|^2,
  $$
  see formulas (7.6) and (6.15) in \cite{serzou}. Using their explicit expressions provided in \cite{serzou}, one can easily see that choosing $a,b,q$ as in \eqref{1} we get $B=C=\bar{A}+\tilde{A}+q\hat{A}=0$.

  Since $u\in C^1$ and positive, $\mathbf{u}_\eps$ and $\mathbf{v}_\eps$ converge to $\mathbf{u}$ and $\mathbf{v}$ in $L^2_{\text{loc}}$ respectively. Since $\mathbf{V}$ and $\mathring{\mathbf{V}}$ are in $L^2_{\text{loc}}$ and  $u\in C^1$ and positive, $\mathbf{\omega}_\eps$ converges to $\mathbf{\omega}$ in $L^1_{\text{loc}}$ as $\eps$ tends to $0$.

  Moreover, $\Gamma_\eps$ converges to $|\nabla u|^p$ in $L^2_{\text{loc}}$ and $|\nabla u|^p\diver\mathbf{u}_\eps$ converges weakly in $L^2_{\text{loc}}$ to $$|\nabla u|^p\diver\mathbf{u}=-|\nabla u|^pu^{q},$$ see the proof of Proposition 7.2 in \cite{serzou}, since for $1<p< 2$ we have $u\in W^{2,2}_{loc}(\RR^n)$. Then we obtain that $\psi_\eps$ converges weakly to $0$ in $L^2_{\text{loc}}$ as $\eps$ tends to $0$.

  Finally we consider the term $I_\eps$. Let $$\Omega_\eps=\{x\in\RR^n\,|\,0<|\nabla u|<\eps\},$$ then $I_\eps=I\geq0$ almost everywhere on $\Omega_\eps^c$ and $I_\eps=I=0$ on $\Omega_{cr}$. Thus
  $$
  \int \phi u^b I_\eps = \int_{\Omega_\eps^c}\phi u^b I+\int_{\Omega_\eps} \phi u^b I_\eps.
  $$
  By formula (7.22) in \cite{serzou} we have
  $$
  \liminf_{\eps\rightarrow0}\int_{\Omega_\eps} \phi u^b I_\eps\geq0,
  $$
  while by the monotone convergence theorem we conclude that
  $$
  \lim_{\eps\rightarrow0}\int_{\Omega_\eps^c}\phi u^b I=\int \phi u^b I.
  $$
  Passing to the limit as $\eps$ tends to $0$ in \eqref{2} we have
$$
\int u^b I\phi\leq-\int \langle\mathbf{\omega},\nabla\phi\rangle,
$$
which is the desired inequality.
  \end{proof}

An easy consequence of the previous proposition is the following key integral estimate, from which we will deduce our main rigidity results.

\begin{corollary}\label{c-fond}
  Let $1<p<n$, $p\neq2$, and $u$ be a positive weak solution of equation \eqref{eq-pl}. Then for every $0\leq \eta\in C^\infty_0(\mathbb{R}^n)$ and $l\geq 2$ we have
$$
  \int u^{\frac{(n-1)p}{n-p}}|\mathring{\mathbf{V}}|^2\eta^l\leq C\int u^{\frac{(2-p)n-p}{n-p}}|\nabla u|^{2(p-1)} |\nabla  \eta|^2\eta^{l-2}
$$
and
$$
  \int u^{\frac{(n-1)p}{n-p}}|\mathring{\mathbf{V}}|^2\eta^l\leq C\left( \int_{\text{supp}|\nabla \eta|} u^{\frac{(n-1)p}{n-p}}|\mathring{\mathbf{V}}|^2\eta^l\right)^{\frac12}\left(\int u^{\frac{(2-p)n-p}{n-p}}|\nabla u|^{2(p-1)} |\nabla  \eta|^2\eta^{l-2}\right)^{\frac12}
$$
\end{corollary}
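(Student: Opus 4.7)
The plan is to apply Proposition \ref{p-mest} with the concrete test function $\phi=\eta^l$, where $\eta\in C_0^\infty(\mathbb{R}^n)$ is non-negative and $l\geq 2$ (so that $\nabla\eta^l=l\eta^{l-1}\nabla\eta$ has no singular cutoff factor and $\eta^{l-2}$ on the right-hand side is well defined). Bounding $|\langle\mathbf{v}\cdot\mathring{\mathbf{V}},\nabla\phi\rangle|\leq l\,\eta^{l-1}|\mathbf{v}||\mathring{\mathbf{V}}||\nabla\eta|$ pointwise reduces everything to showing the right algebraic identities between the exponents and to one application of the Cauchy--Schwarz inequality.

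The first key step is the exponent computation
$$
u^{\frac{(n-1)p}{n-p}}|\mathbf{v}|=u^{\frac{(n-1)p}{n-p}-\frac{n(p-1)}{n-p}}|\nabla u|^{p-1}=u\,|\nabla u|^{p-1},
$$
since the definition \eqref{vectorfield1} gives $|\mathbf{v}|=u^{-\frac{n(p-1)}{n-p}}|\nabla u|^{p-1}$ and $(n-1)p-n(p-1)=n-p$. Using this, Proposition \ref{p-mest} with $\phi=\eta^l$ becomes
$$
\int u^{\frac{(n-1)p}{n-p}}|\mathring{\mathbf{V}}|^2\eta^l\leq l\int u\,|\nabla u|^{p-1}|\mathring{\mathbf{V}}|\,|\nabla\eta|\,\eta^{l-1}.
$$
Now split the integrand on the right as the product $a\cdot b$ with
$$
a=u^{\frac{(n-1)p}{2(n-p)}}|\mathring{\mathbf{V}}|\,\eta^{l/2},\qquad b=u^{1-\frac{(n-1)p}{2(n-p)}}|\nabla u|^{p-1}|\nabla\eta|\,\eta^{l/2-1},
$$
and verify $a^2=u^{\frac{(n-1)p}{n-p}}|\mathring{\mathbf{V}}|^2\eta^l$ and $b^2=u^{\frac{(2-p)n-p}{n-p}}|\nabla u|^{2(p-1)}|\nabla\eta|^2\eta^{l-2}$, which uses $2-\frac{(n-1)p}{n-p}=\frac{2(n-p)-(n-1)p}{n-p}=\frac{(2-p)n-p}{n-p}$. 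Since $b$ carries the factor $|\nabla\eta|$, the product $ab$ vanishes outside $\text{supp}|\nabla\eta|$, so applying Cauchy--Schwarz and restricting the first factor to this support yields the second inequality of the corollary.

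The first inequality is then an immediate consequence of the second. Denote
$$
X=\int u^{\frac{(n-1)p}{n-p}}|\mathring{\mathbf{V}}|^2\eta^l,\qquad X'=\int_{\text{supp}|\nabla\eta|}u^{\frac{(n-1)p}{n-p}}|\mathring{\mathbf{V}}|^2\eta^l,\qquad Y=\int u^{\frac{(2-p)n-p}{n-p}}|\nabla u|^{2(p-1)}|\nabla\eta|^2\eta^{l-2}.
$$
Since $X'\leq X$, the second inequality gives $X\leq C(X')^{1/2}Y^{1/2}\leq CX^{1/2}Y^{1/2}$, and squaring (after the harmless case distinction $X<\infty$, otherwise nothing to prove since $Y=\infty$ by the same estimate) gives $X\leq C^2 Y$. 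I do not see any real obstacle here: once Proposition \ref{p-mest} is granted, the whole corollary reduces to elementary pointwise manipulations and a single Cauchy--Schwarz application, with the only nontrivial point being the identification of the two exponents $\frac{(n-1)p}{n-p}$ and $\frac{(2-p)n-p}{n-p}$ as complementary in the Cauchy--Schwarz splitting.
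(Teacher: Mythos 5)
Your argument is correct and follows essentially the same route as the paper: take $\phi=\eta^l$ in Proposition~\ref{p-mest}, use the exponent identity $u^{\frac{(n-1)p}{n-p}}|\mathbf v|=u\,|\nabla u|^{p-1}$, and apply Cauchy--Schwarz/H\"older to the resulting one-sided estimate; the paper simply proves the first inequality directly via Young's inequality with absorption rather than deriving it from the second. The only loose end is your handling of the case $X=\infty$ --- from $X\leq C(X')^{1/2}Y^{1/2}$ one can only conclude $X'=\infty$ or $Y=\infty$, not $Y=\infty$ outright --- but this is harmless since $X<\infty$ for every compactly supported $\eta$ thanks to the local regularity \eqref{eq-r2}, which ensures $\mathbf V\in L^2_{\mathrm{loc}}$ and is also what the paper's absorption step implicitly relies on.
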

\begin{proof} We consider  $\phi=\eta^l$ in Proposition \ref{p-mest}. Using Cauchy-Schwarz and Young's inequalities and the definition of $\mathbf{v}$ we get
\begin{align*}
 \int u^{\frac{(n-1)p}{n-p}}|\mathring{\mathbf{V}}|^2\eta^l &\leq l \int u |\mathring{\mathbf{V}}| |\nabla u|^{p-1}|\nabla \eta| \eta^{l-1}\\
 &\leq \frac12 \int u^{\frac{(n-1)p}{n-p}}|\mathring{\mathbf{V}}|^2\eta^l + C\int u^{\frac{(2-p)n-p}{n-p}}|\nabla u|^{2(p-1)} |\nabla  \eta|^2\eta^{l-2}
\end{align*}
and the first inequality follows. The second part of the statement can be obtained similarly, using H\"{o}lder's inequality instead.
\end{proof}
\begin{remark} A similar estimate appears in \cite{catmon} in the case $p=2$.
\end{remark}

\

\subsection{Some a priori estimates} We collect here some general lemmas concerning the behavior of positive solutions  of the equation \eqref{eq-pl}, that we will need in the proofs of our main theorems.
The first is a lower bound for positive $p$-superharmonic functions.
\begin{lemma}[{\cite[Lemma 2.3]{serzou}}]\label{l-ser} Let $u$ be a positive weak solution of
$$
\Delta_p u \leq 0
$$
on $\RR^n\setminus K$ with $K$ compact and $1<p<n$. Then there exist positive constants $\rho,A>0$ such that
$$
u(x) \geq \frac{A}{|x|^{\frac{n-p}{p-1}}}\quad\text{for all } x\in B_\rho^c.
$$
\end{lemma}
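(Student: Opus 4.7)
The plan is to combine the positivity of $u$ away from $K$ with the fundamental solution of the $p$-Laplacian used as a barrier on large annuli. First, I would enlarge the compact set $K$ to a closed ball $\overline{B}_{R_0}\supset K$, so that $u$ is a positive weak $p$-superharmonic function on $\RR^n\setminus\overline{B}_{R_0}$. By the $C^{1,\alpha}_{\text{loc}}$-regularity and strict positivity of $u$, there exists $m>0$ such that $u\geq m$ on the sphere $\partial B_{2R_0}$.

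Next, for every $R_1>2R_0$ I would introduce on the annulus $A_{R_1}:=B_{R_1}\setminus\overline{B}_{2R_0}$ the radial barrier
$$
w_{R_1}(x):=A\bigl(|x|^{-\frac{n-p}{p-1}}-R_1^{-\frac{n-p}{p-1}}\bigr),
$$
with the choice $A:=m\,(2R_0)^{\frac{n-p}{p-1}}>0$, which is independent of $R_1$. A direct computation shows that $|x|^{-(n-p)/(p-1)}$ is $p$-harmonic on $\RR^n\setminus\{0\}$, so $\Delta_p w_{R_1}=0$ on $A_{R_1}$. Moreover $w_{R_1}$ vanishes on $\partial B_{R_1}$ and on $\partial B_{2R_0}$ it equals $A\bigl((2R_0)^{-(n-p)/(p-1)}-R_1^{-(n-p)/(p-1)}\bigr)\leq m\leq u$. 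Hence $w_{R_1}\leq u$ on the whole boundary $\partial A_{R_1}$.

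Then I would invoke the weak comparison principle for the $p$-Laplace operator: since $-\Delta_p u\geq 0=-\Delta_p w_{R_1}$ in the weak sense on $A_{R_1}$ and $u\geq w_{R_1}$ on $\partial A_{R_1}$, one concludes $u\geq w_{R_1}$ throughout $A_{R_1}$. Sending $R_1\to\infty$ gives
$$
u(x)\geq A\,|x|^{-\frac{n-p}{p-1}}\qquad\text{for all }|x|\geq 2R_0,
$$
which is the desired estimate with $\rho:=2R_0$.

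The main technical point to handle carefully is the weak comparison principle on the annulus. Since $u$ is only a weak supersolution with the regularity \eqref{eq-r3}--\eqref{eq-r4}, one must verify that $(w_{R_1}-u)_+$, extended by zero outside $A_{R_1}$, is an admissible compactly supported Sobolev test function in the weak formulations for both $u$ and $w_{R_1}$; then the vectorial monotonicity inequality $\langle|\xi|^{p-2}\xi-|\eta|^{p-2}\eta,\xi-\eta\rangle\geq 0$ forces $(w_{R_1}-u)_+\equiv 0$ on $A_{R_1}$. This is the classical argument of Tolksdorf and Damascelli, whose comparison theorems for quasilinear operators with $p$-Laplace structure can alternatively be cited as a black box.
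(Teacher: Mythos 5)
Your proof is correct and follows the standard barrier--comparison argument with the fundamental solution $|x|^{-\frac{n-p}{p-1}}$ on large annuli, which is essentially the proof of \cite[Lemma 2.3]{serzou} that the paper cites without re-proving. One small caveat: for a general weak supersolution of $\Delta_p u\le 0$ one cannot invoke $C^{1,\alpha}_{\mathrm{loc}}$ regularity as you do (that theory applies to solutions, not to supersolutions, which a priori are only $W^{1,p}_{\mathrm{loc}}$ and lower semicontinuous); the lower bound $m>0$ on $\partial B_{2R_0}$ should instead be obtained from the weak Harnack inequality for $p$-superharmonic functions, or by noting that in every application in this paper $u$ is in fact a solution of the critical equation and hence genuinely $C^{1,\alpha}_{\mathrm{loc}}$.
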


The next lemma provides bounds for the kinetic energy in terms of the potential energy for positive weak subsolutions of equation \eqref{eq-pl}. In particular, solutions $u\in L^{p*}(\RR^n)$ automatically have finite energy.

\begin{lemma}\label{l-dario} Let $u$ be a positive weak solution of
$$
-\Delta_p u \leq u^{p^*-1} \qquad \text{ in } \mathbb{R}^n.
$$
Then, for every $\eps>0$ there exists a constant $C=C(n,p)>0$ such that for every $R>0$
\begin{align*}
\int_{B_{2R}\setminus B_R}  |\nabla u|^p  &\leq C\left(1+\eps^{-\frac{p}{n-p}}\right)\int_{B_{5R/2}\setminus B_{R/2}} u^{p^*} + C\eps,
\end{align*}
Moreover, there exists $C=C(n,p)>0$ such that for every $R>0$
$$
\int_{B_R}|\nabla u|^p \leq C\int_{B_{2R}} u^{p^*} +C\left(\int_{B_{2R}} u^{p^*} \right)^{\frac{n-p}{n}}.
$$
In particular, if $u\in L^{p*}(\RR^n)$, then $|\nabla u|\in L^p (\RR^n)$, i.e. $E_{\RR^n}(u)<\infty$.
\end{lemma}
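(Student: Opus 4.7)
The strategy is the standard Caccioppoli/energy estimate for $p$-subharmonic-type inequalities: test the equation against $u\eta^p$ for a suitable cutoff $\eta$, absorb the cross-term via Young's inequality, and then invoke Hölder's inequality to rewrite the low-order leftover in terms of the potential energy.

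\textbf{Step 1: Caccioppoli-type inequality.} Since $u$ is a weak subsolution of $-\Delta_p u = u^{p^*-1}$, for any non-negative $\psi \in W^{1,p}_0(\RR^n)$ we have
$$
\int|\nabla u|^{p-2}\langle \nabla u,\nabla\psi\rangle \le \int u^{p^*-1}\psi.
$$
Take $\psi = u\eta^p$ with $\eta\in C_c^\infty(\RR^n)$, $0\le \eta\le 1$. Expanding $\nabla\psi = \eta^p\nabla u + p u\eta^{p-1}\nabla\eta$ and moving the cross-term to the right, then applying Young's inequality in the form $p u\eta^{p-1}|\nabla u|^{p-1}|\nabla\eta| \le \tfrac12|\nabla u|^p\eta^p + C u^p|\nabla\eta|^p$, one obtains after absorption
$$
\int|\nabla u|^p\eta^p \le C\int u^{p^*}\eta^p + C\int u^p|\nabla\eta|^p.
$$

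\textbf{Step 2: Hölder's inequality on the leftover.} For any measurable $A\subset\RR^n$, apply Hölder with conjugate exponents $p^*/p = n/(n-p)$ and $n/p$ to the pair $(u^p, 1)$:
$$
\int_A u^p \le \Bigl(\int_A u^{p^*}\Bigr)^{(n-p)/n}|A|^{p/n}.
$$
Choosing $\eta$ with $\eta\equiv 1$ on $B_R$, $\spt\eta\subset B_{2R}$ and $|\nabla\eta|\le C/R$, the second term in Step 1 is bounded by $CR^{-p}\cdot R^p(\int_{B_{2R}} u^{p^*})^{(n-p)/n}$, giving the second inequality of the statement:
$$
\int_{B_R}|\nabla u|^p \le C\int_{B_{2R}} u^{p^*} + C\Bigl(\int_{B_{2R}} u^{p^*}\Bigr)^{(n-p)/n}.
$$

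\textbf{Step 3: Annular version and introduction of $\eps$.} Choose now a cutoff with $\eta\equiv 1$ on $B_{2R}\setminus B_R$, supported in $B_{5R/2}\setminus B_{R/2}$, and $|\nabla\eta|\le C/R$; $\spt|\nabla\eta|$ is then contained in $B_{5R/2}\setminus B_{R/2}$, so Steps 1--2 yield
$$
\int_{B_{2R}\setminus B_R}|\nabla u|^p \le C\int_{B_{5R/2}\setminus B_{R/2}} u^{p^*} + C\Bigl(\int_{B_{5R/2}\setminus B_{R/2}} u^{p^*}\Bigr)^{(n-p)/n}.
$$
To convert the sublinear term into a linear one plus an $\eps$, apply Young's inequality $ab \le \tfrac{n-p}{n}a^{n/(n-p)} + \tfrac{p}{n}b^{n/p}$ to $Y^{(n-p)/n} = (cY)^{(n-p)/n}\cdot c^{-(n-p)/n}$, which gives
$$
Y^{(n-p)/n} \le c\,\tfrac{n-p}{n}\, Y + \tfrac{p}{n}\,c^{-(n-p)/p}
$$
for any $c>0$. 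Choosing $c = C'\eps^{-p/(n-p)}$ so that the second summand equals $\eps$ produces
$$
Y^{(n-p)/n} \le C\eps^{-p/(n-p)}\,Y + \eps,
$$
and substituting into the annular inequality yields the first estimate of the lemma.

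\textbf{Step 4: Finite-energy consequence.} If $u\in L^{p^*}(\RR^n)$, let $R\to\infty$ in the second inequality: both $\int_{B_{2R}} u^{p^*}$ and its power stabilize at finite values, so $\int_{\RR^n}|\nabla u|^p<\infty$ by monotone convergence.

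\textbf{Anticipated obstacle.} Nothing deep is required; the only delicate point is the bookkeeping in Step 3 to recover exactly the exponent $\eps^{-p/(n-p)}$. One must resist the temptation to cover the entire annulus $B_{2R}\setminus B_R$ with a single cutoff supported in $B_{3R}$, since the statement demands the tighter annular support $B_{5R/2}\setminus B_{R/2}$; this forces us to pick $\eta$ that is genuinely a bump on an annulus (e.g.\ a product of two radial cutoffs), but does not affect any of the estimates.
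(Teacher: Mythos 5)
Your proof is correct and follows essentially the same route as the paper: a Caccioppoli estimate obtained by testing with $u\eta^{\text{power}}$, absorption of the cross-term by Young, and then control of the leftover $\int u^p|\nabla\eta|^p$ term via H\"older against $\int u^{p^*}$. The paper's only cosmetic difference is that it uses a higher power $\eta^q$ ($q>n$) and performs the pointwise Young $u^p|\nabla\eta|^p\eta^{q-p}\le C\eps^{-p/(n-p)}u^{p^*}\eta^q+\eps|\nabla\eta|^n\eta^{q-n}$ for the annular estimate, whereas you apply H\"older at the integral level and then a scalar Young to the resulting number; the two manipulations are equivalent.
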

\begin{remark} A estimate similar to the second part of the statement appears in the proof of Theorems 1.2 and 1.4 in \cite{catmon} in the case $p=2$.
\end{remark}

\begin{proof}
Testing the weak formulation given in Definition \ref{deb} with $u \eta^q$, with $q>1$ and where $\eta\in C^\infty_0(\mathbb{R}^n)$, we obtain
\begin{equation*}
\int u^{\frac{np}{n-p}}\eta^q \geq  \int  |\nabla u|^p \eta^q + q\int u|\nabla u|^{p-2}\langle \nabla u, \nabla \eta\rangle\eta^{q-1} \, .
\end{equation*}
i.e., from Cauchy-Schwarz and Young inequalities
\begin{align}\label{eqbo}\nonumber
 \int  |\nabla u|^p \eta^q &\leq \int u^{\frac{np}{n-p}}\eta^q   + q\int u|\nabla u|^{p-1}\vert\nabla \eta\vert\eta^{q-1} \\
 &\leq \int u^{\frac{np}{n-p}}\eta^q + \frac12 \int \vert\nabla u\vert^{p} \eta^{q} + C\int \vert\nabla \eta\vert^p u^p \eta^{q-p}\\\nonumber
 &\leq  C\left(1+\eps^{-\frac{p}{n-p}}\right) \int u^{\frac{np}{n-p}}\eta^q + \frac12 \int \vert\nabla u\vert^{p} \eta^{q} + \eps \int \vert\nabla \eta\vert^n \eta^{q-n},
\end{align}
for every $\eps>0$. Let $q>n$, for any $R>1$, we choose $\eta\in C^{\infty}_0(\RR^n)$  such that $\eta\equiv 1$ in $B_{2R}\setminus B_R$, $\eta\equiv 0$ in $B_{R/2}\cup B_{5R/2}^c$, $0\leq \eta\leq 1$ on $\RR^n$ and $\eta$ satisfies
$$
|\nabla \eta|^{2} \leq C R^{-2}\quad\text{in }(B_{5R/2}\setminus B_{2R})\cup(B_{R}\setminus B_{R/2}).
$$
Hence, for every $\eps>0$, we get
\begin{align*}
\int_{B_{2R}\setminus B_R}  |\nabla u|^p  &\leq C\left(1+\eps^{-\frac{p}{n-p}}\right)\int_{B_{5R/2}\setminus B_{R/2}} u^{\frac{np}{n-p}} + C\eps,
\end{align*}
which is the first part of the statement. In order to prove the second part, let $q>p$ and for any $R>1$ choose $\eta\in C^{\infty}_0(\RR^n)$ such that $\eta\equiv 1$ in $B_R$, $\eta\equiv 0$ in $B_{2R}^c$, $0\leq \eta\leq 1$ on $\RR^n$ and $\eta$ satisfies
$$
|\nabla \eta|^{2} \leq C R^{-2} \quad\text{in }B_{2R}\setminus B_{R}.
$$
From \eqref{eqbo} using H\"older inequality we get
\begin{align*}
\int_{B_{R}}  |\nabla u|^p  &\leq C\int_{B_{2R}} u^{\frac{np}{n-p}} +  \dfrac{C}{R^p} \int_{B_{2R}\setminus B_{R}}  u^p  \\
 & \leq C\int_{B_{2R}} u^{\frac{np}{n-p}} +  \dfrac{C}{R^p}\left( \int_{B_{2R}}  u^{\frac{np}{n-p}} \right)^{\frac{n-p}{n}} \vert B_{2R}\setminus B_{R}\vert^{\frac{p}{n}}\\
 & \leq C\int_{B_{2R}} u^{\frac{np}{n-p}} +  C\left( \int_{B_{2R}}  u^{\frac{np}{n-p}} \right)^{\frac{n-p}{n}}\, .
\end{align*}
\end{proof}

\

Similar to the previous lemma, the following provides bounds for the potential energy in terms of the kinetic energy for positive weak supersolutions of the equation \eqref{eq-pl}. In particular, solutions with $\nabla u\in L^{p}(\RR^n)$ automatically have finite energy. Since the proof is similar to the previous one, we will omit it.

\begin{lemma}\label{l-2}
Let $u$ be a positive weak solution of
$$
-\Delta_p u \geq u^{p^*-1} \qquad \text{ in } \mathbb{R}^n.
$$
Then, for every $\eps>0$ there exists a constant $C=C(n,p)>0$ such that for every $R>0$
\begin{align*}
\int_{B_{2R}\setminus B_R} u^{p^*}   &\leq C\left(1+\eps^{-\frac{p}{n(p-1)}}\right)\int_{B_{5R/2}\setminus B_{R/2}} |\nabla u|^p + C\eps,
\end{align*}
Moreover, there exists $C=C(n,p)>0$ such that for every $R>0$
$$
\int_{B_R}u^{p^*} \leq C\int_{B_{2R}} |\nabla u|^{p} +C\left(\int_{B_{2R}} |\nabla u|^{p} \right)^{\frac{n(p-1)}{n(p-1)+p}}.
$$
In particular, if $\nabla u\in L^{p}(\RR^n)$, then $u\in L^{p^*} (\RR^n)$, i.e. $E_{\RR^n}(u)<\infty$.
\end{lemma}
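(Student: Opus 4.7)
The approach mirrors the proof of Lemma \ref{l-dario}, with the roles of kinetic and potential energy exchanged. The starting point is to test the supersolution inequality with $\psi = u\eta^q$ for a non-negative cutoff $\eta\in C^\infty_0(\mathbb R^n)$ and $q>n$; expanding $\nabla\psi = \eta^q\nabla u + qu\eta^{q-1}\nabla\eta$ and using Definition \ref{deb} immediately yields
\[
\int u^{p^*}\eta^q \;\leq\; \int |\nabla u|^p\eta^q + q\int u|\nabla u|^{p-1}|\nabla\eta|\eta^{q-1}.
\]
In contrast to \eqref{eqbo} in the proof of Lemma \ref{l-dario}, the inequality now goes in the right direction: the good term $\int|\nabla u|^p\eta^q$ is already on the right, and only the cross term needs to be handled.

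Next I would apply a double Hölder--Young argument to the cross term. Splitting $u|\nabla u|^{p-1}|\nabla\eta|\eta^{q-1} = |\nabla u|^{p-1}\eta^{(p-1)q/p}\cdot u|\nabla\eta|\eta^{q/p-1}$ and using Hölder with conjugate exponents $p/(p-1)$ and $p$ reduces the task to estimating $\int u^p|\nabla\eta|^p\eta^{q-p}$, which in turn can be handled by Hölder with exponents $n/(n-p)$ and $n/p$, producing a factor $(\int u^{p^*}\eta^q)^{(n-p)/n}$ at the price of an $L^n$-norm of $|\nabla\eta|$. The only remaining term on the right in the same scale as the left-hand side is a power $(n-p)/(np)$ of $\int u^{p^*}\eta^q$, which is absorbed by a Young inequality with exponents $np/(n-p)$ and $np/(n(p-1)+p)$ and a small coefficient. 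The resulting inequality reads
\[
\int u^{p^*}\eta^q \;\leq\; C\int|\nabla u|^p\eta^q + C\Bigl(\int|\nabla u|^p\eta^q\Bigr)^{\!\!\frac{n(p-1)}{n(p-1)+p}} \Bigl(\int|\nabla\eta|^n\eta^{q-n}\Bigr)^{\!\!\frac{p}{n(p-1)+p}}.
\]

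Both statements then follow from appropriate choices of $\eta$. For the second estimate I take $\eta\equiv 1$ on $B_R$, supported in $B_{2R}$, with $|\nabla\eta|\leq C/R$; a volume count gives $\int|\nabla\eta|^n\eta^{q-n}\leq C$ independent of $R$, which is precisely what the second estimate asserts. For the first estimate I use an annular cutoff equal to $1$ on $B_{2R}\setminus B_R$ and supported in $B_{5R/2}\setminus B_{R/2}$, and then apply one final Young inequality of the form $X^{n(p-1)/(n(p-1)+p)}\leq \eps^{-p/(n(p-1))}X + \eps$ to reproduce the precise $\eps$-dependence stated in the lemma (this is valid because the exponent is strictly less than $1$, and the conjugate ratio $(1-a)/a$ works out to exactly $p/(n(p-1))$). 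The consequence $\nabla u\in L^p(\mathbb R^n)\Rightarrow u\in L^{p^*}(\mathbb R^n)$ is then obtained by sending $R\to\infty$ in the second estimate.

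The main obstacle is purely algebraic bookkeeping: one has to verify that the two Hölder applications combine cleanly with the Young absorption so that the resulting exponent on $\int u^{p^*}\eta^q$ is strictly greater than $1$, which is indeed the case for every $1<p<n$ since then $np/(n-p)>1$. No additional regularity beyond $u\in W^{1,p}_{\mathrm{loc}}\cap L^\infty_{\mathrm{loc}}$ is required in order to make $u\eta^q$ an admissible test function in $W^{1,p}_0(\mathbb R^n)$, and no truncation of $|\nabla u|$ is needed, unlike in the proof of Proposition \ref{p-mest}.
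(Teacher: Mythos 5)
Your proof is correct and is essentially the argument the paper intends: the paper omits the proof of Lemma~\ref{l-2} stating it mirrors Lemma~\ref{l-dario}, and you correctly test the supersolution inequality with $u\eta^q$ and absorb the cross term, the only cosmetic difference being that you apply H\"older at the integral level where the paper's Lemma~\ref{l-dario} argument uses pointwise Young followed by integration. The exponents you track, in particular the conjugate pair $\tfrac{np}{n-p}$, $\tfrac{np}{n(p-1)+p}$ that produces the final absorption and the ratio $\tfrac{p}{n(p-1)}$ in the $\eps$-dependence, are exactly right.
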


\begin{remark}\label{rem-bo}
As already observed in the introduction, if $u$ is a positive weak solution of equation \eqref{eq-pl}, by Lemmas \ref{l-dario} and \ref{l-2} for every $\alpha>0$ one has
$$
E_{A_R}^{\text{pot}}(u)=O(R^\alpha) \qquad \Longleftrightarrow\qquad E_{A_R}(u)=O(R^\alpha) \qquad \Longleftrightarrow\qquad E_{A_R}^{\text{kin}}(u)=O(R^\alpha).
$$
Indeed by Lemma \ref{l-dario} there exists $C>0$ such that, for every $R>0$, we have
\begin{align*}
  E_{A_R}^{\text{pot}}(u) & \leq E_{A_R}(u)\leq (C+1)\int_{B_{5R/2}\setminus B_{R/2}}u^{p^*}+1 \\
  &\leq(C+1)\left(E_{A_{R/2}}^{\text{pot}}(u)+E_{A_R}^{\text{pot}}(u)+E_{A_{2R}}^{\text{pot}}(u)\right)+1.
\end{align*}
Similarly by Lemma \ref{l-2} for every $R>0$
\begin{align*}
  E_{A_R}^{\text{kin}}(u) & \leq E_{A_R}(u)\leq (C+1)\int_{B_{5R/2}\setminus B_{R/2}}|\nabla u|^{p}+1 \\
  &\leq(C+1)\left(E_{A_{R/2}}^{\text{kin}}(u)+E_{A_R}^{\text{kin}}(u)+E_{A_{2R}}^{\text{kin}}(u)\right)+1.
\end{align*}
Thus we conclude.
\end{remark}

\

\section{A sharp gradient estimate}\label{gr-est}

In this section we will prove the sharp gradient estimate in Proposition \ref{p-ge}. To the best of our knowledge this result is new and we believe that it may have independent interest.

We begin by defining the (second order part of the) linearized $p$-Laplace operator (see e.g \cite{kotni, val})
$$
P_f(w):=|\nabla f|^{p-2}\Delta w+(p-2)|\nabla f|^{p-4}\nabla^2 w (\nabla f,\nabla f).
$$
Observe that $P_f(f)=\Delta_p f$. The following inequality follows from the extension to $p$-Laplace operator of the classical Bochner formula (see e.g. \cite{kotni, val}).
\begin{lemma}\label{pboch} Given $x\in\RR^n$, a domain $U$ containing $x$ and a function $f\in C^3(U)$, if $|\nabla f|(x)\neq 0$, at $x$ it holds
\begin{align*}
\frac1p P_f(|\nabla f|^p) &\geq \frac1n (\Delta_p f)^2 +\frac{n}{n-1}\left(\frac1n \Delta_p f - (p-1)|\nabla f|^{p-4}\nabla^2 f (\nabla f, \nabla f)\right)^2\\
&\quad+ |\nabla f|^{p-2}\left[\langle \nabla f, \nabla \Delta_p f\rangle - (p-2) \frac{\Delta_p f}{|\nabla f|^2}\nabla^2 f (\nabla f, \nabla f)\right]
\end{align*}
\end{lemma}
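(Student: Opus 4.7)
The plan is a direct pointwise calculation at $x$, working in a small neighborhood where $|\nabla f|$ stays positive so that $w := |\nabla f|^p$ is $C^2$, followed by an application of a refined Kato-type inequality for the Hessian. The identity
$$P_f(w) = |\nabla f|^{p-2}\Delta w + (p-2)|\nabla f|^{p-4}\nabla^2 w(\nabla f,\nabla f)$$
will be expanded by differentiating $w$ twice and contracting. Setting $H := \nabla^2 f(\nabla f,\nabla f)$, the two auxiliary identities I will lean on are
\begin{align*}
\nabla^3 f(\nabla f,\nabla f,\nabla f) &= \langle\nabla f,\nabla H\rangle - 2|\nabla^2 f(\nabla f)|^2,\\
|\nabla f|^{p-2}\langle\nabla f,\nabla\Delta f\rangle &= \langle\nabla f,\nabla\Delta_p f\rangle - (p-2)|\nabla f|^{p-4}H\,\Delta f\\
&\quad- (p-2)(p-4)|\nabla f|^{p-6}H^2 - (p-2)|\nabla f|^{p-4}\langle\nabla f,\nabla H\rangle.
\end{align*}
The first comes from differentiating $H = f_{ij} f_i f_j$; the second from differentiating $\Delta_p f = |\nabla f|^{p-2}\Delta f + (p-2)|\nabla f|^{p-4}H$ and contracting with $\nabla f$. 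Plugging these into the expansion of $P_f(w)$, the third-order quantities cancel out and one arrives at the closed form
\begin{align*}
\tfrac1p P_f(|\nabla f|^p) = \;&|\nabla f|^{2p-4}|\nabla^2 f|^2 + 2(p-2)|\nabla f|^{2p-8}H^2\\
&+ |\nabla f|^{p-2}\langle\nabla f,\nabla\Delta_p f\rangle - (p-2)|\nabla f|^{2p-6}H\,\Delta f.
\end{align*}

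For the second stage I pick orthonormal coordinates at $x$ with $e_1 = \nabla f/|\nabla f|$, so $H = |\nabla f|^2 f_{11}$; introducing $h := |\nabla f|^{p-4}H = |\nabla f|^{p-2}f_{11}$ and $Y := |\nabla f|^{p-2}(\Delta f - f_{11})$ one has $\Delta_p f = (p-1)h + Y$. Discarding off-diagonal Hessian entries and applying Cauchy--Schwarz to the remaining $n-1$ diagonal ones gives the refined Kato-type inequality
$$|\nabla f|^{2p-4}|\nabla^2 f|^2 \geq |\nabla f|^{2p-4}\Bigl(f_{11}^2 + \tfrac{(\Delta f - f_{11})^2}{n-1}\Bigr) = h^2 + \tfrac{Y^2}{n-1}.$$
Inserting this in the closed form above, and using $(p-2)|\nabla f|^{p-2}h\,\Delta f = (p-2)h\,\Delta_p f - (p-2)^2 h^2$ to repackage the cross term, the desired inequality reduces to the purely algebraic identity
$$h^2 + \tfrac{Y^2}{n-1} + p(p-2)h^2 = \tfrac1n\bigl((p-1)h + Y\bigr)^2 + \tfrac{n}{n-1}\Bigl(\tfrac1n\bigl((p-1)h + Y\bigr) - (p-1)h\Bigr)^2,$$
both sides of which expand to $(p-1)^2 h^2 + Y^2/(n-1)$.

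The main technical obstacle is the bookkeeping in the first stage: because $P_f(w)$ genuinely involves $\nabla^3 f$ and $\langle\nabla f,\nabla\Delta f\rangle$, the argument hinges on an exact cancellation of these third-order terms when the two auxiliary identities are substituted, and every coefficient has to be tracked carefully. Once the closed-form expression for $P_f(|\nabla f|^p)$ is in hand, the rest is the classical refined Cauchy--Schwarz inequality for the Hessian adapted to the $p$-Laplace setting, where the anisotropy in the direction of $\nabla f$ accounts for the appearance of the squared ``traceless'' correction $\tfrac1n\Delta_p f - (p-1)h$ on the right-hand side.
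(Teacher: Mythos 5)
Your proof is correct, and it is essentially the same argument the paper invokes by citation: the paper simply refers to Valtorta's $p$-Bochner formula and sharp Hessian estimate (\cite{val}, Proposition 3.1 and Lemma 3.2), which are precisely your ``closed form'' for $\tfrac1p P_f(|\nabla f|^p)$ and your refined Kato/Cauchy--Schwarz step in the adapted orthonormal frame. Your version is a self-contained derivation of those two ingredients, and every identity and coefficient checks out, including the final algebraic reduction to $(p-1)^2 h^2 + Y^2/(n-1)$.
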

\begin{proof} It follows combining the $p$-Bochner formula \cite[Proposition 3.1]{val} with the sharp estimate \cite[Lemma 3.2]{val}.
\end{proof}
\subsection{Proof of Proposition \ref{p-ge}} Let $u$ be a positive weak solution of equation \eqref{eq-pl} with $1<p<n$ and define $f:=u^a>0$, $a\in\RR\setminus\{0\}$. By regularity we know that $f$ is smooth where $|\nabla f|>0$. Thus, where $|\nabla f|>0$, we have
$$
\nabla f = a u^{a-1}\nabla u
$$
and,
\begin{align*}
\Delta_p f &=\diver(|\nabla f|^{p-2}\nabla f) \\
&=a|a|^{p-2}\diver(u^{(a-1)(p-1)}|\nabla u|^{p-2}\nabla u)\\
&=a|a|^{p-2}u^{(a-1)(p-1)}\Delta_p u + a|a|^{p-2}(a-1)(p-1)u^{(a-1)(p-1)-1}|\nabla u|^p\\
&=-a|a|^{p-2}u^{(a-1)(p-1)+q}+ a|a|^{p-2}(a-1)(p-1)u^{(a-1)(p-1)-1}|\nabla u|^p\\
&=-a|a|^{p-2}f^{\frac{(a-1)(p-1)+q}{a}}+ \frac{(a-1)(p-1)}{a}\frac{|\nabla f|^p}{f}\,,
\end{align*}
with $q=p^*-1$. Using Lemma \ref{pboch}, where $|\nabla f|>0$, we obtain
\begin{align*}
\frac1p P_f(|\nabla f|^p) &\geq \tfrac1n (\Delta_p f)^2 +\tfrac{n}{n-1}\left(\frac1n \Delta_p f - (p-1)|\nabla f|^{p-4}\nabla^2 f (\nabla f, \nabla f)\right)^2\\
&\quad+ |\nabla f|^{p-2}\left[\langle \nabla f, \nabla \Delta_p f\rangle - (p-2) \frac{\Delta_p f}{|\nabla f|^2}\nabla^2 f (\nabla f, \nabla f)\right] \\
&\geq\tfrac{1}{n-1}(\Delta_p f)^2 +|\nabla f|^{p-2}\langle \nabla f, \nabla \Delta_p f\rangle-\tfrac{2(p-1)+(n-1)(p-2)}{(n-1)}|\nabla f|^{p-4}\Delta_p f\nabla^2 f (\nabla f, \nabla f)\\
&=\tfrac{1}{n-1}(\Delta_p f)^2 +|\nabla f|^{p-2}\langle \nabla f, \nabla \Delta_p f\rangle+c_1|\nabla f|^{-2}\Delta_p f\langle\nabla|\nabla f|^p,\nabla f\rangle\\
&\geq\tfrac{(1-a)(p-1)[p-1+a(n-p)]}{a^2(n-1)}f^{-2}|\nabla f|^{2p}-c_2f^{\frac{(a-1)(p-1)+q}{a}-1}|\nabla f|^p\\
&\quad+\left(c_1|\nabla f|^{-2}\Delta_p f+c_3f^{-1}|\nabla f|^{p-2}\right)\langle\nabla|\nabla f|^p,\nabla f\rangle\\
&=\tfrac{(1-a)(p-1)[p-1+a(n-p)]}{a^2(n-1)}f^{-2}|\nabla f|^{2p}-c_2f^{\frac{(a-1)(p-1)+q}{a}-1}|\nabla f|^p\\
&\quad+\left(c_4f^{\frac{(a-1)(p-1)+q}{a}}|\nabla f|^{-2}+c_5f^{-1}|\nabla f|^{p-2}\right)\langle\nabla|\nabla f|^p,\nabla f\rangle
\end{align*}
with
$$
c_1=-\tfrac{2(p-1)+(n-1)(p-2)}{p(n-1)},\qquad c_2=|a|^{p-2}\left[\tfrac{n+1}{n-1}(a-1)(p-1)+q\right],
$$
$$
c_3=\tfrac{(a-1)(p-1)}{a},\qquad c_4=-a|a|^{p-2}c_1,\qquad c_5=\tfrac{(a-1)(p-1)}{a}c_1+c_3.
$$
We choose $a$
\begin{equation}\label{a}
a:=-\frac{p-1}{n-p}+\eps
\end{equation}
for a given $0<\eps<\frac{p-1}{n-p}$. Then there exists $\lambda=\lambda(\eps)>0$ such that, where $|\nabla f|>0$, we have
\begin{align}\label{estp}
\frac1p P_f(|\nabla f|^p) &\geq \lambda f^{-2}|\nabla f|^{2p}-c_2f^{\frac{(a-1)(p-1)+q}{a}-1}|\nabla f|^p\\\nonumber
&\quad+\left(c_4f^{\frac{(a-1)(p-1)+q}{a}}|\nabla f|^{-2}+c_5f^{-1}|\nabla f|^{p-2}\right)\langle\nabla|\nabla f|^p,\nabla f\rangle.
\end{align}
If $|\nabla f|$ achieves its maximum in $B_{2R}$ at some point $\bar{x}\in B_{2R}$, we have
$$
\nabla |\nabla f|^p=0\qquad\text{and}\qquad P_f(|\nabla f|^p)\leq 0\qquad\text{at } \bar{x}
$$
and \eqref{estp} implies, at $\bar{x}$,
$$
0\geq f^{-2}|\nabla f|^p\left(\lambda|\nabla f|^p -c_2f^{\frac{(a-1)(p-1)+q}{a}+1}\right)=f^{-2}|\nabla f|^p\left(\lambda|\nabla f|^p -c_2u^{(a-1)(p-1)+q+a}\right).
$$
Moreover, since $q+1=\frac{np}{n-p}$ we have
\begin{equation}\label{teta}
\theta:=(a-1)(p-1)+q+a=\frac{p}{n-p}+p\eps>0.
\end{equation}
We obtain
$$
\sup_{B_{2R}}|\nabla f| \leq C\sup_{B_{2R}} u^{\frac{\theta}{p}}\quad\Longleftrightarrow\quad |\nabla u(x)|\leq C\left(\sup_{B_{2R}} u^{\frac{1}{n-p}+\eps}\right) u(x)^{\frac{n-1}{n-p}-\eps}
$$
for all $x\in B_{2R}$.

On the other hand, if $|\nabla f|$ does not achieve its maximum at some point $\bar{x}$, we have to employ a cutoff argument. For a given $0<\delta<\frac{1}{2}$ there exist nonnegative cutoff functions  $\phi=\phi(|x|)$ with $\phi\equiv 1$ on $B_R$, $\phi\equiv 0$ on $B_{2R}^c$, $0\leq\phi\leq 1$ on $\RR^n$ and such that
\begin{equation}\label{cut-off}
|\nabla \phi|\leq \frac{C}{R} \phi^{1-\delta},\qquad |\nabla^2 \phi|\leq \frac{C}{R^2}\phi^{1-2\delta},
\end{equation}
on $B_{2R}\setminus B_R$ for some $C>0$\footnote{We observe that such cutoff functions can be obtained setting $\phi(x)=\psi\left(\frac{\vert x\vert}{R}\right)^{1/\delta}$,  where $\psi\in C^2([0,\infty))$ is such that $\psi\equiv 1$ in $[0,1)$, $\psi\equiv 0$ in $[2,\infty)$ and $0\leq\psi\leq 1$.}. Let
$$
H:=\phi|\nabla f|^p
$$
and $\bar{x}$ be a maximum point of $H$. We can assume that $\phi(\bar{x})>0$ and $|\nabla f|(\bar{x})>0$. At $\bar{x}$ we have
$$
\nabla H=0,\qquad P_{f} H\leq 0,
$$
Therefore, at $\bar{x}$, we have
\begin{equation}\label{nhz}
\nabla H=0\quad\Longleftrightarrow\quad \nabla |\nabla f|^p = -\phi^{-2}H\nabla \phi.
\end{equation}
Moreover, using \eqref{nhz}, we have
$$
\nabla_i\nabla_j H = \phi^{-1}H\nabla_i\nabla_j\phi-2\phi^{-2}H\nabla_i\phi\nabla_j\phi+\phi\nabla_i\nabla_j|\nabla f|^p
$$
and thus
$$
\Delta H =  \phi^{-1}H\Delta\phi-2\phi^{-2}H|\nabla\phi|^2+\phi\Delta|\nabla f|^p.
$$
Using the definition of $P_f$, at $\bar{x}$ we obtain
\begin{align*}
0 &\geq |\nabla f|^{4-p}P_f H \\
&= |\nabla f|^2 \phi^{-2}\left(\phi\Delta\phi-2|\nabla\phi|^2\right)H+\phi|\nabla f|^2\Delta|\nabla f|^p\\
&\quad+(p-2)\phi^{-2}\left[\phi\nabla^2\phi(\nabla f,\nabla f)H-2\langle \nabla\phi,\nabla f\rangle^2H+\phi^3\nabla^2|\nabla f|^p(\nabla f,\nabla f)\right]\\
&\geq \phi|\nabla f|^{4-p}P_f|\nabla f|^p-\frac{C}{R^2}\phi^{-2\delta}|\nabla f|^2 H,
\end{align*}
i.e.
\begin{equation}\label{eq11}
0\geq \phi^{1+2\delta}P_f|\nabla f|^p-\frac{C}{R^2}\phi^{-\frac{p-2}{p}}H^{\frac{2(p-1)}{p}}.
\end{equation}
From \eqref{estp}, we have
\begin{align*}
\frac1p P_f(|\nabla f|^p) &\geq \lambda f^{-2}\phi^{-2}H^2-c_2f^{-2}u^{\theta}\phi^{-1}H-\left(c_4f^{-1}u^{\theta}|\nabla f|^{-2}+c_5f^{-1}|\nabla f|^{p-2}\right)\phi^{-2}\langle\nabla\phi,\nabla f\rangle H,
\end{align*}
where $\theta>0$ is defined in \eqref{teta}. We get
\begin{align*}
\frac1p P_f(|\nabla f|^p) \geq \lambda f^{-2}\phi^{-2}H^2-Cf^{-2}u^{\theta}\phi^{-1}H-\frac{C}{R}f^{-1}u^{\theta}\phi^{-1-\delta+\frac 1p}H^{\frac{p-1}{p}}-\frac{C}{R}f^{-1}\phi^{-1-\delta-\frac{p-1}{p}}H^{\frac{2p-1}{p}}.
\end{align*}
Using Lemma \ref{l-ser}, on $B_{2R}$ we have
$$
f = u^a \leq C R^{-a\frac{n-p}{p-1}}=C R^{1-\eps\frac{n-p}{p-1}}\quad\Longleftrightarrow\quad -\frac{1}{R}\geq-\frac{CR^{-\eps\frac{n-p}{p-1}}}{f}
$$
Therefore
\begin{align*}
\frac1p P_f(|\nabla f|^p) \geq f^{-2}\left(\lambda \phi^{-2}H^2-C\phi^{-1}u^{\theta}H-C\phi^{-1-\delta+\frac 1p}u^{\theta}R^{-\eps\frac{n-p}{p-1}}H^{\frac{p-1}{p}}-C\phi^{-1-\delta-\frac{p-1}{p}}R^{-\eps\frac{n-p}{p-1}}H^{\frac{2p-1}{p}}\right).
\end{align*}
Using \eqref{eq11} we obtain
\begin{align*}
0 &\geq \lambda \phi^{-2}H^2-C\phi^{-1}u^{\theta} H-C\phi^{-1-\delta+\frac 1p}u^{\theta}R^{-\eps\frac{n-p}{p-1}}H^{\frac{p-1}{p}}\\
&\quad-C\phi^{-1-\delta-\frac{p-1}{p}}R^{-\eps\frac{n-p}{p-1}}H^{\frac{2p-1}{p}}-C\phi^{-1-2\delta-\frac{p-2}{p}}R^{-2\eps\frac{n-p}{p-1}}H^{\frac{2(p-1)}{p}}.
\end{align*}
Now, choosing $\delta<\min\lbrace\frac 1p,\frac 12\rbrace$, since $0\leq\phi\leq 1$, at $\bar{x}$ we obtain
\begin{align*}
0&\geq  \lambda H^2-Cu^{\theta}H-Cu^{\theta}R^{-\eps\frac{n-p}{p-1}}H^{\frac{p-1}{p}}-CR^{-\eps\frac{n-p}{p-1}}H^{\frac{2p-1}{p}}-CR^{-2\eps\frac{n-p}{p-1}}H^{\frac{2(p-1)}{p}}\\
&\geq \frac{\lambda}{2} H^2-C\left(u^{2\theta}+R^{-2\eps\frac{p(n-p)}{p-1}}\right),
\end{align*}
where we used Young's inequality\footnote{We remark that on the third term $u^{\theta}R^{-\eps\frac{n-p}{p-1}}H^{\frac{p-1}{p}}$ we used the following generalization of the classical Young's inequality:
$$
abc\leq \varepsilon a^r+ k_1(\varepsilon)b^s + k_2(\varepsilon)c^t\, ,
$$
for all $a,b,c\geq 0$, $\varepsilon>0$ and where $r,s,t>1$ are such that $\frac{1}{r}+\frac{1}{s}+\frac{1}{t}=1$.}. This clearly implies
$$
H\leq H(\bar{x})\leq C\left(u^{\theta}(\bar{x})+R^{-\eps\frac{p(n-p)}{p-1}}\right)\qquad\text{on }B_{2R}
$$
for every $R>0$, and in particular
$$
|\nabla f|^p\leq C\left(\sup_{B_{2R}}u^{\theta}+R^{-\eps\frac{p(n-p)}{p-1}}\right)\qquad\text{on }B_{R},
$$
i.e.
$$
|\nabla u|\leq C\left(\sup_{B_{2R}} u^{\frac{1}{n-p}+\eps}+R^{-\eps\frac{n-p}{p-1}}\right) u^{\frac{n-1}{n-p}-\eps}\qquad\text{on }B_{R}
$$
which is the thesis.
\begin{flushright}
$\square$
\end{flushright}

\

In case $u$ is controlled by a power of the distance at infinity, one can obtain the following point-wise estimate on the gradient of $u$ in terms of $u$. In \cite[Corollary 2.2]{FMM} the authors obtained a logarithmic gradient estimate in case $p=2$.

\begin{corollary}\label{c-ge}
Let $u$ be a positive weak solution of equation \eqref{eq-pl} with $1<p<n$. Assume
$$
u(x)\leq C |x|^\alpha
$$
for all $x\in B_1^c$, for some $\alpha\in\RR$. Then, for every $0<\eps<\frac{p-1}{n-p}$ it holds
$$
|\nabla u(x)|\leq C\left(|x|^{\left(\frac{1}{n-p}+\eps\right)\alpha}+|x|^{-\eps\frac{n-p}{p-1}}\right) u(x)^{\frac{n-1}{n-p}-\eps}
$$
for some $C=C(n,p,\eps,\alpha)>0$, for every $x\in B_4^c$.
\end{corollary}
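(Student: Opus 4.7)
The plan is to obtain this as a direct, localized application of Proposition \ref{p-ge}, with the ball radius chosen proportional to the distance from the origin. Fix $x \in B_4^c$ and set $R := |x|/4 \geq 1$. Then for every $y \in B_{2R}(x)$ the triangle inequality gives
$$\tfrac{1}{2}|x| \leq |y| \leq \tfrac{3}{2}|x|,$$
so in particular $B_{2R}(x) \subset B_1^c$, which is exactly the region where the growth hypothesis $u(y) \leq C|y|^{\alpha}$ is available.

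Using that hypothesis on $B_{2R}(x)$ and distinguishing the trivial cases $\alpha \geq 0$ and $\alpha < 0$ (in both of which $\sup_{B_{2R}(x)} |y|^\alpha$ is bounded by a dimensional multiple of $|x|^\alpha$), one immediately gets
$$\sup_{B_{2R}(x)} u \leq C\,|x|^{\alpha},$$
with $C = C(\alpha)$. Hence
$$\sup_{B_{2R}(x)} u^{\frac{1}{n-p}+\eps} \leq C\, |x|^{\left(\frac{1}{n-p}+\eps\right)\alpha}, \qquad R^{-\eps\frac{n-p}{p-1}} \leq C\,|x|^{-\eps\frac{n-p}{p-1}}.$$
Substituting these two bounds into the conclusion of Proposition \ref{p-ge} applied at $x_0 = x$ with this particular choice of $R$ yields
$$|\nabla u(x)| \leq C\left(|x|^{\left(\frac{1}{n-p}+\eps\right)\alpha} + |x|^{-\eps\frac{n-p}{p-1}}\right) u(x)^{\frac{n-1}{n-p}-\eps},$$
which is the desired estimate. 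Since $R$ is comparable to $|x|$ with a fixed multiplicative factor, all constants depend only on $n$, $p$, $\eps$ and $\alpha$.

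There is no real obstacle here: the only point to take care of is choosing $R$ comparable to $|x|$ so that (i) $B_{2R}(x)$ remains in the region where the polynomial bound on $u$ is known, (ii) every point in $B_{2R}(x)$ has norm of order $|x|$, so the supremum of $u^\cdot$ transfers into a clean power of $|x|$, and (iii) the extra scale-dependent term $R^{-\eps(n-p)/(p-1)}$ in Proposition \ref{p-ge} turns into the corresponding power of $|x|$ appearing in the statement. The result is essentially a rescaled/localized reading of the sharp gradient estimate of the previous section.
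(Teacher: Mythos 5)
Your argument is correct and is exactly the approach the paper takes: the paper's proof of Corollary \ref{c-ge} consists of the single observation that one should apply Proposition \ref{p-ge} on $B_{|x|/4}(x)$ at the point $x$, and your write-up simply fills in the straightforward comparability estimates ($\frac12|x|\le|y|\le\frac32|x|$ on that ball, the two sign cases for $\alpha$) that make this one-liner rigorous.
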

\begin{proof} It is sufficient to apply Proposition \ref{p-ge} on $B_{|x|/4}(x)$ at the point $x$.
\end{proof}

\

\section{Rigidity with energy control}\label{s-ec}

In this section we prove Theorem \ref{t-e1}.

\subsection{Proof of Theorem \ref{t-e1} (i)} Let $u$ be a positive weak solution of equation \eqref{eq-pl} with $1<p\leq \frac{2n}{n+1}$. From Corollary \ref{c-fond} with $l=2$ we have
\begin{equation}\label{est}
  \int u^{\frac{(n-1)p}{n-p}}|\mathring{\mathbf{V}}|^2\eta^2\leq C\int u^{\frac{(2-p)n-p}{n-p}}|\nabla u|^{2(p-1)} |\nabla  \eta|^2.
\end{equation}
for every $\eta\in C^\infty_0(\mathbb{R}^n)$ with $\eta\geq0$. For any $R>1$, we choose $\eta\in C^{\infty}_0(\RR^n)$ such that $\eta\equiv 1$ in $B_R$, $\eta\equiv 0$ in $B_{2R}^c$, $0\leq \eta\leq 1$ on $\RR^n$ and $\eta$ satisfies
$$
|\nabla \eta|^{2} \leq C R^{-2} \quad\text{in }A_R=B_{2R}\setminus B_{R}.
$$
We show that the integral on the righthand side of \eqref{est} is uniformly bounded in $R$. Since $p\leq \frac{2n}{n+1}<2$, we have
$$
\frac{(2-p)n-p}{n-p}\geq 0\qquad\text{and}\qquad 2(p-1)<p.
$$
If $1<p<\frac{2n}{n+1}$, using H\"older inequality we obtain
\begin{align*}
\int u^{\frac{(2-p)n-p}{n-p}}|\nabla u|^{2(p-1)} |\nabla  \eta|^2 &\leq \frac{C}{R^2}\left(\int_{A_R}u^{p^*}\right)^{\frac{(2-p)n-p}{np}}\left(\int_{A_R}|\nabla u|^p\right)^{\frac{2(p-1)}{p}}|A_R|^{\frac 1n}\\
&\leq \frac{C}{R} E^{\text{pot}}_{A_R}(u)^{\frac{(2-p)n-p}{np}}E^{\text{kin}}_{A_R}(u)^{\frac{2(p-1)}{p}}\\
&\leq \frac{C}{R} E_{A_R}(u)^{\frac{n-1}{n}}.
\end{align*}

If $p=\frac{2n}{n+1}$, similarly we have
\begin{align*}
\int u^{\frac{(2-p)n-p}{n-p}}|\nabla u|^{2(p-1)} |\nabla  \eta|^2 &=\int |\nabla u|^{\frac{2(n-1)}{n+1}} |\nabla  \eta|^2\\
&\leq\frac{C}{R^2}\left(\int_{A_R}|\nabla u|^{\frac{2n}{n+1}}\right)^{\frac{n-1}{n}}|A_R|^{\frac 1n}\\
&\leq \frac{C}{R} E^{\text{kin}}_{A_R}(u)^{\frac{n-1}{n}}\\
&\leq \frac{C}{R} E_{A_R}(u)^{\frac{n-1}{n}}.
\end{align*}

Thanks to the energy assumptions, in both cases we have  that the righthand side of \eqref{est} is uniformly bounded in $R$. Hence
$$
\int_{\RR^n} u^{\frac{(n-1)p}{n-p}}|\mathring{\mathbf{V}}|^2<\infty,
$$
and by the second inequality in Corollary \ref{c-fond}, passing to the limit as $R$ tends to infinity, we obtain
$$
\int_{\RR^n} u^{\frac{(n-1)p}{n-p}}|\mathring{\mathbf{V}}|^2=0,
$$
i.e.
\begin{equation}\label{eq12}
\mathring{\mathbf{V}} = \nabla  \mathbf{v}-\frac{\diver\mathbf{v}}{n}\operatorname{Id}_n \equiv 0\qquad\text{in } \Omega_{cr}^c.
\end{equation}
Let $\Omega_0\subseteq\Omega_{cr}^c $ be a connected component of $\Omega_{cr}^c$. Since $0<u\in C^{1,\alpha}_{\text{loc}}(\RR^n)$ then
$$
v=u^{-\frac{p}{n-p}}\in C^{1,\alpha}_{\text{loc}}(\RR^n).
$$
Since
$$\mathbf{v}=-\left(\frac{n-p}{p}\right)^{p-1}|\nabla v|^{p-2}\nabla v$$
we get
\begin{align*}
\diver\mathbf{v}&=-\left(\frac{n-p}{p}\right)^{p-1}\Delta_p v \\
&=u^{-\frac{n(p-1)}{n-p}}\Delta_p u -\frac{n(p-1)}{n-p}u^{-\frac{p(n-1)}{n-p}}|\nabla u|^p\\
&=-u^{\frac{p}{n-p}} -\frac{n(p-1)}{n-p}u^{-\frac{p(n-1)}{n-p}}|\nabla u|^p \in  C^{0,\alpha}_{\text{loc}}(\RR^n).
\end{align*}
By standard elliptic regularity, we have $v\in C^{2,\alpha}_{\text{loc}}(\Omega_0)$, $u\in C^{2,\alpha}(\Omega_0)$ and then $\diver\mathbf{v}\in C^{1,\alpha}_{\text{loc}}(\Omega_0)$. Differentiating \eqref{eq12}, we get
$$
\partial_i \left(\diver\mathbf v\right) =n\,\partial_i\left(\diver \mathbf{v}\right).
$$
Therefore $\diver\mathbf v=\text{const}$ on $\Omega_0$ and thus $\mathbf{v}=C(x-x_0)$, for some $C\in\RR$ and some $x_0\in\RR^n$. Thus
$$
v=C_1+C_2|x-x_0|^{\frac{p}{p-1}}
$$
on $\Omega_0$, for some $C_1,C_2>0$. Then $u(x)=\mathcal U_{\lambda,x_0} (x)$ on $\Omega_0$ for some $\lambda>0$ and $x_0\in\RR^n$. Since the argument above holds whenever $\nabla u\neq 0$, we must have $\Omega_0=\RR^n\setminus\{x_0\}$ and the result follows.

\

\subsection{Proof of Theorem \ref{t-e1} (ii)} Let $u$ be a positive weak solution of equation \eqref{eq-pl} with $\frac{2n}{n+1}<p<2$. From Corollary \ref{c-fond} with $l=2$ we have
\begin{equation}\label{est3}
 \int u^{\frac{(n-1)p}{n-p}}|\mathring{\mathbf{V}}|^2\eta^2\leq C\int u^{\frac{(2-p)n-p}{n-p}}|\nabla u|^{2(p-1)} |\nabla  \eta|^2.
\end{equation}
We choose the same cutoff functions as in (i). Since $\frac{2n}{n+1}<p<2$, we have
$$
\frac{(2-p)n-p}{n-p}< 0\qquad\text{and}\qquad 2(p-1)<p.
$$
Thanks to Lemma \ref{l-ser} we have $u\geq C R^{-\frac{n-p}{p-1}}$ on $A_R$ and using H\"older inequality we get
\begin{align*}
\int u^{\frac{(2-p)n-p}{n-p}}|\nabla u|^{2(p-1)} |\nabla  \eta|^2 &\leq C R^{-\frac{(n-1)(2-p)}{p-1}} \left(\int_{A_R}|\nabla u|^p\right)^{\frac{2(p-1)}{p}}|A_R|^{\frac{2-p}{p}}\\
&\leq CR^{-\frac{(n-p)(2-p)}{p(p-1)}}  E^{\text{kin}}_{A_R}(u)^{\frac{2(p-1)}{p}}\\
&\leq CR^{-\frac{(n-p)(2-p)}{p(p-1)}}  E_{A_R}(u)^{\frac{2(p-1)}{p}}
\end{align*}
Thanks to the energy assumption, we have  that the righthand side of \eqref{est3} is uniformly bounded in $R$. Hence
$$
\int_{\RR^n} u^{\frac{(n-1)p}{n-p}}|\mathring{\mathbf{V}}|^2=0,
$$
and the conclusion follows as in the proof of Theorem \ref{t-e1} (i).

\subsection{Proof of Theorem \ref{t-e1} (iii)} Let $u$ be a positive weak solution of equation \eqref{eq-pl} with $2<p<n$ and assume
$$u(x)\leq C|x|^{\alpha},$$
as $|x|\to\infty$ for some $\alpha\geq 0$.  From Corollary \ref{c-fond} with $l=2$ we have
\begin{equation}\label{est2}
 \int u^{\frac{(n-1)p}{n-p}}|\mathring{\mathbf{V}}|^2\eta^2\leq C\int u^{\frac{(2-p)n-p}{n-p}}|\nabla u|^{2(p-1)} |\nabla  \eta|^2.
\end{equation}
We choose the same cutoff functions as in (i). We have
\begin{align*}
\int u^{\frac{(2-p)n-p}{n-p}}|\nabla u|^{2(p-1)} |\nabla  \eta|^2 &=\int u^{\frac{(2-p)n-p}{n-p}}|\nabla u|^{\theta}|\nabla u|^{2(p-1)-\theta}  |\nabla  \eta|^2 \\
&\leq C R^{\left(\frac{1}{n-p}+\eps\right)\alpha\theta}\int u^{\frac{(2-p)n-p+\theta(n-1)-\eps(n-p)\theta}{n-p}}|\nabla u|^{2(p-1)-\theta}  |\nabla  \eta|^2,
\end{align*}
for every $\theta>0$ and every $\eps>0$ small enough, where we used the gradient estimate in Corollary \ref{c-ge}. We assume
$$
p-2<\theta<2(p-1)\quad\text{and}\quad \theta>\frac{(p-2)n+p}{n-1-\eps(n-p)}
$$
and we apply H\"older inequality to obtain
\begin{align*}
\int u^{\frac{(2-p)n-p}{n-p}}&|\nabla u|^{2(p-1)} |\nabla  \eta|^2\\
&\leq C R^{\left(\frac{1}{n-p}+\eps\right)\alpha\theta-2}\left(\int_{A_R}u^{p^*}\right)^{\frac{\theta(n-1)-(p-2)n-p-\eps(n-p)\theta}{np}}\left(\int_{A_R}|\nabla u|^p\right)^{\frac{2(p-1)-\theta}{p}}|A_R|^{\frac{p+\theta+\eps(n-p)\theta}{np}}\\
&\leq C R^{\left(\frac{1}{n-p}+\eps\right)\alpha\theta-1+\frac{\theta+\eps(n-p)\theta}{p}} E^{\text{pot}}_{A_R}(u)^{\frac{\theta(n-1)-(p-2)n-p-\eps(n-p)\theta}{np}}E^{\text{kin}}_{A_R}(u)^{\frac{2(p-1)-\theta}{p}}\\
&\leq C R^{\left(\frac{1}{n-p}+\eps\right)\alpha\theta-1+\frac{\theta+\eps(n-p)\theta}{p}}  E_{A_R}(u)^{\frac{np-p-\theta-\eps(n-p)\theta}{np}}
\end{align*}
Under our assumptions, $E_{A_R}(u)\leq C R^k$ for some $k>0$. Then
$$
\int u^{\frac{(2-p)n-p}{n-p}}|\nabla u|^{2(p-1)} |\nabla  \eta|^2 \leq C R^{\left(\frac{1}{n-p}+\eps\right)\alpha\theta-1+\frac{\theta+\eps(n-p)\theta}{p}+k\left(\frac{np-p-\theta-\eps(n-p)\theta}{np}\right)}.
$$
Since $p-2<\frac{(p-2)n+p}{n-1-\eps(n-p)}<2(p-1)$, by choosing $\theta$ close to $\frac{(p-2)n+p}{n-1-\eps(n-p)}$ and $\eps$ close to $0$, thanks to the energy assumption, we have  that the righthand side of \eqref{est2} is uniformly bounded in $R$. Hence
$$
\int_{\RR^n} u^{\frac{(n-1)p}{n-p}}|\mathring{\mathbf{V}}|^2=0,
$$
and the conclusion follows as in the proof of Theorem \ref{t-e1} (i).

\

\section{Rigidity with control at infinity}\label{s-ci}

In this section we prove Theorems \ref{t-se1} and \ref{t-se2}. We start by proving the following weak energy estimate which shows that a weighted energy has controlled growth on balls.
\begin{lemma}\label{l-t} Let $u$ be a positive weak solution of equation \eqref{eq-pl} for some $1<p<n$ and let $t<-1$. Then, for every $R>1$, we have
$$
\int_{B_R}u^{\frac{np+t(n-p)}{n-p}}+\int_{B_R}u^t|\nabla u|^p \leq C R^\beta,
$$
for some $C=C(n,p,t)>0$ and
$$
\beta:=\begin{cases}
-\frac{t(n-p)}{p}& t+p> 0\\
-\frac{(t+1)(n-p)}{p-1}& t+p\leq 0.
\end{cases}
$$
\end{lemma}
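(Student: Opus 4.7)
The natural strategy is a Caccioppoli-type estimate obtained by testing \eqref{eq-pl} against $\psi = u^{t+1}\eta^q$, where $\eta \in C^\infty_0(\RR^n)$ is a standard cutoff with $\eta \equiv 1$ on $B_R$, $\mathrm{supp}\,\eta \subset B_{2R}$, $|\nabla \eta| \le C/R$, and $q$ is a (large) integer to be fixed. Since $u \in C^{1,\alpha}_{\rm loc}$ is strictly positive, $u^{t+1}$ is smooth and bounded on $\mathrm{supp}\,\eta$, so $\psi \in W^{1,p}_0(\RR^n)$ is admissible even though $t+1<0$. Plugging in $\psi$, using $p^\ast - 1 + (t+1) = p^\ast + t = \frac{np+t(n-p)}{n-p}$ and rearranging (noting $t+1<0$) yields
$$
\int u^{\frac{np+t(n-p)}{n-p}}\eta^q + |t+1|\int u^t|\nabla u|^p\eta^q = q\int u^{t+1}|\nabla u|^{p-2}\langle \nabla u,\nabla\eta\rangle \eta^{q-1}.
$$
Applying Cauchy--Schwarz followed by Young's inequality with conjugate exponents $p/(p-1)$ and $p$ on the right-hand side allows me to absorb a fraction of $\int u^t|\nabla u|^p\eta^q$ into the left, producing the Caccioppoli-type bound
$$
\int u^{\frac{np+t(n-p)}{n-p}}\eta^q + \int u^t|\nabla u|^p\eta^q \le C\int u^{t+p}|\nabla\eta|^p\eta^{q-p}.
$$
The rest of the proof is to estimate this right-hand side by $CR^\beta$, and this is where the two cases on the sign of $t+p$ enter.

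\textbf{Case $t+p\le 0$.} Here $u^{t+p}$ is controlled \emph{from above} by a lower bound on $u$. Since $u$ is a positive $p$-superharmonic function on all of $\RR^n$, Lemma~\ref{l-ser} and the positivity/continuity of $u$ on compact sets give $u(x)\ge c R^{-\frac{n-p}{p-1}}$ on $B_{2R}$ for all sufficiently large $R$. Raising to the non-positive power $t+p$ yields $u^{t+p}\le CR^{-(t+p)\frac{n-p}{p-1}}$ on the support of $\nabla\eta$. Combining with $|\nabla\eta|^p\le CR^{-p}$ and $|B_{2R}\setminus B_R|\le CR^n$ gives
$$
\int u^{t+p}|\nabla \eta|^p\eta^{q-p} \le CR^{n-p-(t+p)\frac{n-p}{p-1}} = CR^{-(t+1)\frac{n-p}{p-1}},
$$
which is exactly the exponent $\beta$ in this case. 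For small $R$ the integrals on the left are anyway bounded by local $L^\infty$ estimates, so by adjusting constants the inequality holds for all $R>1$.

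\textbf{Case $t+p>0$.} Now the pointwise lower bound on $u$ no longer helps, but $u^{t+p}$ can be paired with the integral of $u^{p^\ast+t}$ on the left. I apply Young's inequality to the integrand $u^{t+p}|\nabla\eta|^p\eta^{q-p}$ with exponents $r=(p^\ast+t)/(t+p)>1$ and $r'=r/(r-1)$:
$$
u^{t+p}|\nabla\eta|^p\eta^{q-p}\le \varepsilon\, u^{p^\ast+t}\eta^q + C_\varepsilon|\nabla\eta|^{pr'}\eta^{q-pr'},
$$
choosing $q$ large enough that $q-pr'\ge 0$. A direct computation using $p^\ast=np/(n-p)$ gives $pr'=n+t(n-p)/p>0$, hence $|\nabla\eta|^{pr'}\le CR^{-pr'}$ and the support has volume $\le CR^n$, producing
$$
C_\varepsilon\int|\nabla\eta|^{pr'}\eta^{q-pr'}\le CR^{n-pr'}=CR^{-t(n-p)/p}.
$$
Choosing $\varepsilon$ small, the $u^{p^\ast+t}\eta^q$ term is absorbed into the left-hand side, and this again matches the claimed $\beta$.

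\textbf{Expected obstacle.} Nothing is deep here; the content is really just bookkeeping of two different Young/H\"older schemes glued at $t+p=0$ (where both formulas for $\beta$ coincide, as a sanity check). The only delicate point is verifying that the test function $u^{t+1}\eta^q$ is admissible for $t+1<0$ and that all exponents ($q-p$, $q-pr'$, and the algebraic identities reducing $n-p-(t+p)\frac{n-p}{p-1}$ and $n-pr'$ to the stated values of $\beta$) work out consistently, including the uniformity in $R$ down to $R=1$.
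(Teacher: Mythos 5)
Your proof is correct and follows essentially the same route as the paper: test the weak formulation with $u^{t+1}\eta^q$, absorb the gradient term via Young's inequality, and handle the resulting boundary integral $\int u^{t+p}|\nabla\eta|^p\eta^{q-p}$ either by the lower bound of Lemma~\ref{l-ser} (when $t+p\le 0$) or by a second Young absorption against the $u^{p^*+t}$ term (when $t+p>0$). The only cosmetic difference is that the paper compresses the $t+p>0$ case into a single three-term Young inequality rather than your two successive two-term applications, and it does not explicitly isolate the uniform Caccioppoli estimate as an intermediate step; the arithmetic ($pr'=n+t(n-p)/p$, $n-p-(t+p)\frac{n-p}{p-1}=-\frac{(t+1)(n-p)}{p-1}$) and the admissibility of the test function (note $u^{t+1}\eta^q$ is only $C^{1,\alpha}_c$, not smooth, but that suffices for $W^{1,p}_0$) all check out.
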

\begin{proof}  We choose $\eta\in C^{\infty}_0(\RR^n)$ be such that $\eta\equiv 1$ in $B_R$, $\eta\equiv 0$ in $B_{2R}^c$, $0\leq \eta\leq 1$ on $\RR^n$ and $\eta$ satisfies
$$
|\nabla \eta|^{2} \leq C R^{-2} \quad\text{in }A_R=B_{2R}\setminus B_{R}.
$$
Testing the weak formulation given in \ref{deb} with $u^{t+1} \eta^l$, for $l$ sufficiently large, we obtain
\begin{align*}
-\int u^{\frac{np+t(n-p)}{n-p}}\eta^l &= -(t+1)\int u^t |\nabla u|^p \eta^l -l\int u^{t+1}|\nabla u|^{p-2}\langle \nabla u, \nabla \eta\rangle\eta^{l-1}\\
&\geq |t+1| \int u^t |\nabla u|^p \eta^l -l \int u^{t+1}|\nabla u|^{p-1}|\nabla \eta|\eta^{l-1}
\end{align*}
If $t+p> 0$, we get
\begin{align*}
-\int u^{\frac{np+t(n-p)}{n-p}}\eta^l &\geq |t+1| \int u^t |\nabla u|^p \eta^l -\eps \int u^{\frac{np+t(n-p)}{n-p}}\eta^l-\eps \int u^t |\nabla u|^p \eta^l\\
&\quad-C_{\eps} \int |\nabla \eta|^{\frac{np+t(n-p)}{p}}\eta^{l-\frac{np+t(n-p)}{p}}\\
&\geq \left(|t+1|-\eps\right) \int u^t |\nabla u|^p \eta^l -\eps \int u^{\frac{np+t(n-p)}{n-p}}\eta^l - C_\eps R^{-\frac{np+t(n-p)}{p}}|B_{2R}|
\end{align*}
for some $\eps>0$, where we used Cauchy-Schwarz and Young's inequalities\footnote{We remark that on the third term $u^{t+1}|\nabla u|^{p-1}|\nabla \eta|\eta^{l-1}$ we used the following generalization of the classical Young's inequality:
$$
abc\leq\eps a^r+ \eps b^s + k(\varepsilon)c^t\, ,
$$
for all $a,b,c\geq 0$, $\varepsilon>0$ and where $r,s,t>1$ are such that $\frac{1}{r}+\frac{1}{s}+\frac{1}{t}=1$.}. Choosing $\eps$ small enough, we conclude.

\

On the other hand, if $t+p\leq0$, we get
\begin{align*}
-\int u^{\frac{np+t(n-p)}{n-p}}\eta^l &\geq |t+1| \int u^t |\nabla u|^p \eta^l -\eps \int u^t |\nabla u|^p \eta^l-C_{\eps} \int u^{t+p}|\nabla \eta|^{p}\eta^{l-p}\\
&\geq \left(|t+1|-\eps\right) \int u^t |\nabla u|^p \eta^l - C_\eps R^{-\frac{(t+p)(n-p)}{p-1}-p+n}\\
&\geq \left(|t+1|-\eps\right) \int u^t |\nabla u|^p \eta^l - C_\eps R^{-\frac{(t+1)(n-p)}{p-1}}
\end{align*}
for some $\eps>0$, where we used Lemma \ref{l-ser}. Choosing $\eps$ small enough, we conclude.

\end{proof}

\subsection{Proof of Theorems \ref{t-se0} and \ref{t-se1}} Let $u$ be a positive weak solution of equation \eqref{eq-pl} with $1<p<2$ and let $\gamma\geq 0$. Let $\eta$ as in the proof of the previous lemma. From Corollary \ref{c-fond} with $l=2$ we have
\begin{equation}\label{est2b}
 \int u^{\frac{(n-1)p}{n-p}}|\mathring{\mathbf{V}}|^2\eta^2\leq C\int u^{\frac{(2-p)n-p}{n-p}}|\nabla u|^{2(p-1)} |\nabla  \eta|^2.
\end{equation}
We choose $\eta\in C^{\infty}_0(\RR^n)$ be such that $\eta\equiv 1$ in $B_R$, $\eta\equiv 0$ in $B_{2R}^c$, $0\leq \eta\leq 1$ on $\RR^n$ and $\eta$ satisfies
$$
|\nabla \eta|^{2} \leq C R^{-2} \quad\text{in }A_R=B_{2R}\setminus B_{R}.
$$
Since $p<2$, then $2(p-1)<p$ and, by H\"older inequality we obtain
\begin{align*}
\int u^{\frac{(2-p)n-p}{n-p}}&|\nabla u|^{2(p-1)} |\nabla  \eta|^2 \\
&\leq \sup_{A_R} u^{\gamma} \int_{A_R}u^{\frac{(2-p)n-p}{n-p}-\gamma}|\nabla u|^{2(p-1)} |\nabla  \eta|^2 \\
&\leq \frac{1}{R^2}\sup_{A_R} u^{\gamma} \int_{A_R}u^{\frac{(2-p)n-p}{n-p}-\gamma-\frac{2t(p-1)}{p}}\left(u^t|\nabla u|^p\right)^{\frac{2(p-1)}{p}} \\
&\leq  \frac{1}{R^2}\sup_{A_R} u^{\gamma} \left(\int_{A_R}u^t|\nabla u|^p\right)^{\frac{2(p-1)}{p}}\left(\int_{A_R}u^{\frac{(2-p)np-p^2-\gamma p (n-p)-2t(p-1)(n-p)}{(n-p)(2-p)}}\right)^{\frac{2-p}{p}}.
\end{align*}
We choose
$$
t=\bar{t}:=-\frac{p}{n-p}-\gamma
$$
in order to have
$$
\frac{(2-p)np-p^2-\gamma p (n-p)-2\bar{t}(p-1)(n-p)}{(n-p)(2-p)} = \frac{np+\bar{t}(n-p)}{n-p}.
$$
We observe that
\begin{equation}\label{t1}
\bar{t}<-1\qquad\Longleftrightarrow\qquad (\gamma-2)p<(\gamma-1) n.
\end{equation}
Then, from Lemma \ref{l-t}, we obtain
\begin{align}\label{est12}\nonumber
\int u^{\frac{(2-p)n-p}{n-p}}|\nabla u|^{2(p-1)} |\nabla  \eta|^2&\leq  \frac{C}{R^2}\sup_{A_R} u^{\gamma} \left(\int_{A_R}u^{\bar{t}}|\nabla u|^p\right)^{\frac{2(p-1)}{p}}\left(\int_{A_R}u^{\frac{np+\bar{t}(n-p)}{n-p}}\right)^{\frac{2-p}{p}}\\
&\leq C\sup_{A_R} u^{\gamma} R^{\beta-2}.
\end{align}

\subsection{Proof of Theorem \ref{t-se0} (i)} Let $n=2$, $1<p<2$ and choose $\gamma=0$. In particular \eqref{t1} is satisfied and $\bar{t}+p=-\frac{p(p-1)}{2-p}<0$. Then, $\beta=2$ and from \eqref{est12} we get
$$
\int u^{\frac{4-3p}{2-p}}|\nabla u|^{2(p-1)} |\nabla  \eta|^2\leq C,
$$
for every $R>0$. Hence, arguing as in the proof of Theorem \ref{t-e1} (i), from Corollary \ref{c-fond} the conclusion follows.

\

\subsection{Proof of Theorem \ref{t-se0} (ii)} Let $n=3$. If $\frac 32<p<2$ we again choose $\gamma=0$. In particular \eqref{t1} is satisfied  and $\bar{t}+p=\frac{p(2-p)}{3-p}>0$. Then, $\beta=1$ and from \eqref{est12} we get
$$
\int u^{\frac{6-4p}{3-p}}|\nabla u|^{2(p-1)} |\nabla  \eta|^2\leq CR^{-1}\longrightarrow 0,
$$
as $R$ tends to $\infty$. Hence, arguing as in the proof of Theorem \ref{t-e1} (i), from Corollary \ref{c-fond} the conclusion follows.

\

\subsection{Proof of Theorem \ref{t-se1} (i)} If $1<p\leq \frac32$ we assume
$$u(x)\leq C|x|^{\alpha},$$
as $|x|\to\infty$, for some $\alpha<\bar{\alpha}:=\frac{3(p-1)(3-p)}{p(3-2p)}$. If $\gamma$ satisfies \eqref{t1}, from \eqref{est12} we get
$$
\int u^{\frac{6-4p}{3-p}}|\nabla u|^{2(p-1)} |\nabla  \eta|^2\leq CR^{\beta-2+\alpha\gamma}.
$$
We choose $\alpha$ and $\gamma$ such that
$$
\begin{cases}
(\gamma-2)p<3(\gamma-1) \\
\beta-2+\alpha\gamma\leq 0
\end{cases} \quad\Longleftrightarrow \quad\begin{cases}
\gamma>\frac{3-2p}{3-p}\\
\alpha\leq \frac{2-\beta}{\gamma}.
\end{cases}
$$
For $\gamma$ close to $\frac{3-2p}{3-p}$, we have $\bar{t}+p$ close to $p-1>0$. Then
$$
\beta=1+\frac{\gamma(3-p)}{p}\quad\text{and}\quad \alpha\leq\frac{p(\gamma+1)-3\gamma}{p\gamma}.
$$
Since the right-hand side in the second inequality is decreasing in $\gamma$, it is sufficient to have
$$
\alpha<\frac{3(p-1)(3-p)}{p(3-2p)}=\bar{\alpha}.
$$
Letting $R\to\infty$, from \eqref{est2b} we obtain
$$
\int_{\RR^n} u^{\frac{2p}{3-p}}|\mathring{\mathbf{V}}|^2=0,
$$
and the conclusion follows as in the proof of Theorem \ref{t-e1} (i).

\

\subsection{Proof of Theorem \ref{t-se1} (ii)} Let $n\geq 4$, $1<p<2$ and assume
$$u(x)\leq C|x|^{\alpha},$$
as $|x|\to\infty$, for some $\alpha<\bar{\alpha}:=\frac{(3p-n)(n-p)}{p(n-2p)}$. If $\gamma$ satisfies \eqref{t1}, from \eqref{est12} we get
$$
\int u^{\frac{(2-p)n-p}{n-p}}|\nabla u|^{2(p-1)} |\nabla  \eta|^2\leq CR^{\beta-2+\alpha\gamma}.
$$
We choose $\alpha$ and $\gamma$ such that
$$
\begin{cases}
(\gamma-2)p<n(\gamma-1) \\
\beta-2+\alpha\gamma\leq0
\end{cases} \quad\Longleftrightarrow\quad\begin{cases}
\gamma>\frac{n-2p}{n-p}\\
\alpha\leq\frac{2-\beta}{\gamma}.
\end{cases}
$$
For $\gamma$ close to $\frac{n-2p}{n-p}$, we have $\bar{t}+p$ close to $p-1>0$. Then
$$
\beta=1+\frac{\gamma(n-p)}{p}\quad\text{and}\quad \alpha\leq\frac{p(\gamma+1)-n\gamma}{p\gamma}.
$$
Since the right-hand side in the second inequality is decreasing in $\gamma$, it is sufficient to have
$$
\alpha<\frac{(3p-n)(n-p)}{p(n-2p)}=\bar{\alpha}.
$$
Letting $R\to\infty$, from \eqref{est2b} we obtain
$$
\int_{\RR^n} u^{\frac{(n-1)p}{n-p}}|\mathring{\mathbf{V}}|^2=0,
$$
and the conclusion follows as in the proof of Theorem \ref{t-e1} (i).

\

\subsection{Proof of Theorem \ref{t-se2}}
Let $p>2$ and assume
$$u(x)\leq C|x|^{\alpha},$$
as $|x|\to\infty$. We choose $\eta\in C^{\infty}_0(\RR^n)$ be such that $\eta\equiv 1$ in $B_R$, $\eta\equiv 0$ in $B_{2R}^c$, $0\leq \eta\leq 1$ on $\RR^n$ and $\eta$ satisfies
$$
|\nabla \eta|^{2} \leq C R^{-2} \quad\text{in }A_R=B_{2R}\setminus B_{R}.
$$

\

\noindent {\em Case 1:} $\alpha\geq 0$. We have
\begin{align*}
\int u^{\frac{(2-p)n-p}{n-p}}&|\nabla u|^{2(p-1)} |\nabla  \eta|^2 =\int u^{\frac{(2-p)n-p}{n-p}}|\nabla u|^{\theta}|\nabla u|^{2(p-1)-\theta}  |\nabla  \eta|^2 \\
&\leq \frac{C}{R^2}\sup_{A_R}u^{\gamma}\int u^{\frac{(2-p)n-p-\gamma(n-p)}{n-p}}|\nabla u|^{\theta}||\nabla u|^{2(p-1)-\theta}  \\	
&\leq CR^{\left(\frac{1}{n-p}+\eps\right)\alpha\theta-2}\sup_{A_R}u^{\gamma}\int u^{\frac{(2-p)n-p-\gamma(n-p)+\theta(n-1)-\eps(n-p)\theta}{n-p}}|\nabla u|^{2(p-1)-\theta}\\
&= CR^{\left(\frac{1}{n-p}+\eps\right)\alpha\theta-2}\sup_{A_R}u^{\gamma}\int u^{\frac{(2-p)n-p-\gamma(n-p)+\theta(n-1)-\eps(n-p)\theta}{n-p}-\frac{t[2(p-1)-\theta]}{p}}\left(u^t|\nabla u|^p\right)^{\frac{2(p-1)-\theta}{p}},
\end{align*}
for every $t\in\RR$, $\theta>0$, $\gamma\geq 0$ and every $\eps>0$ small enough, where we used the gradient estimate in Corollary \ref{c-ge} with $\alpha\geq 0$. We assume
\begin{equation}\label{t2b}
p-2<\theta<2(p-1)
\end{equation}
and we apply H\"older inequality to obtain
\begin{align*}
\int &u^{\frac{(2-p)n-p}{n-p}}|\nabla u|^{2(p-1)} |\nabla  \eta|^2 \\
&\leq CR^{\left(\frac{1}{n-p}+\eps\right)\alpha\theta-2}\sup_{A_R}u^{\gamma}\left(\int_{A_R} u^{\left\{\frac{(2-p)n-p-\gamma(n-p)+\theta(n-1)-\eps(n-p)\theta}{n-p}-\frac{t[2(p-1)-\theta]}{p}\right\}\frac{p}{2-p+\theta}}\right)^{\frac{2-p+\theta}{p}}\cdot\\
&\hspace{6cm}\cdot\left(\int_{A_R} u^t|\nabla u|^p\right)^{\frac{2(p-1)-\theta}{p}}.
\end{align*}
We choose
$$
t=\bar{t}:=-\frac{p+\theta+\eps(n-p)\theta}{n-p}-\gamma
$$
in order to have
$$
\left\{\frac{(2-p)n-p-\gamma(n-p)+\theta(n-1)-\eps(n-p)\theta}{n-p}-\frac{\bar{t}[2(p-1)-\theta]}{p}\right\}\frac{p}{2-p+\theta} = \frac{np+\bar{t}(n-p)}{n-p}.
$$
We observe that
\begin{equation}\label{t1b}
\bar{t}<-1\qquad\Longleftrightarrow\qquad (\gamma-2)p<(\gamma-1) n+\theta+\eps(n-p)\theta.
\end{equation}
Then, from Lemma \ref{l-t}, we obtain
\begin{align}\label{est12b}\nonumber
\int u^{\frac{(2-p)n-p}{n-p}}&|\nabla u|^{2(p-1)} |\nabla  \eta|^2\\\nonumber
&\leq CR^{\left(\frac{1}{n-p}+\eps\right)\alpha\theta-2}\sup_{A_R} u^{\gamma} \left(\int_{A_R}u^{\bar{t}}|\nabla u|^p\right)^{\frac{2(p-1)-\theta}{p}}\left(\int_{A_R}u^{\frac{np+\bar{t}(n-p)}{n-p}}\right)^{\frac{2-p+\theta}{p}}\\\nonumber
&\leq C\sup_{A_R} u^{\gamma} R^{\beta+\left(\frac{1}{n-p}+\eps\right)\alpha\theta-2}\\
&\leq C R^{\beta+\left(\frac{1}{n-p}+\eps\right)\alpha\theta-2+\alpha\gamma}
\end{align}
We aim at finding $\theta$  and $\gamma$ satisfying \eqref{t2b} and \eqref{t1b} such that
\begin{equation}\label{ttt}
\beta+\left(\frac{1}{n-p}+\eps\right)\alpha\theta-2+\alpha\gamma\leq0.
\end{equation}
{\em Case 1.1:} If $p\geq\frac{n+2}{3}$, we choose $\theta$ close to $p-2$, $\gamma=0$ and $\eps>0$ small enough. Then $\bar{t}+p$ is close to
$$\frac{-p^2+(n-2)p+2}{n-p}$$
If $\frac{n+2}{3}\leq p<\frac{n-2+\sqrt{n^2-4n+12}}{2}:=\check{p}$, then $\bar{t}+p>0$ and hence $\beta=-\frac{\bar{t}(n-p)}{p}.$
A simple computation shows that \eqref{ttt} is satisfied if
$$
\alpha< \frac{2(n-p)}{p(p-2)}=:\hat{\alpha}.
$$
On the other hand, if $p\geq \check{p}$, then $\bar{t}+p<0$ and hence $\beta=-\frac{(\bar{t}+1)(n-p)}{p-1},$ which is close to $\frac{3p-n-2}{p-1}$. Thus \eqref{ttt} is satisfied if
$$
\alpha< \frac{(n-p)^2}{(p-2)(p-1)}=:\check{\alpha}.
$$

\

\noindent{\em Case 1.2:} If $2<p<\frac{n+2}{3}$, we choose again $\theta$ close to $p-2$, $\gamma$ close to $\frac{n-3p+2}{n-p}$ and $\eps>0$ small enough. Then $\bar{t}+p$ is close to $p-1>0$ and hence $\beta=-\frac{\bar{t}(n-p)}{p}$ and it is close to $\frac{n-p}{p}$. Hence, in order to verify \eqref{ttt} it is sufficient to choose
$$
\alpha<\frac{(3p-n)(n-p)}{p(n-2p)}=:\bar{\alpha}.
$$
In particular this case occurs only if $\frac n 3<p<\frac{n+2}{3}$.

\

\noindent {\em Case 2:} $\alpha< 0$. We have
\begin{align*}
\int u^{\frac{(2-p)n-p}{n-p}}&|\nabla u|^{2(p-1)} |\nabla  \eta|^2 =\int u^{\frac{(2-p)n-p}{n-p}}|\nabla u|^{\theta}|\nabla u|^{2(p-1)-\theta}  |\nabla  \eta|^2 \\
&\leq \frac{C}{R^2}\sup_{A_R}u^{\gamma}\int u^{\frac{(2-p)n-p-\gamma(n-p)}{n-p}}|\nabla u|^{\theta}|\nabla u|^{2(p-1)-\theta}  \\	
&\leq CR^{-2}\sup_{A_R}u^{\gamma}\int u^{\frac{(2-p)n-p-\gamma(n-p)+\theta(n-1)-\eps(n-p)\theta}{n-p}}|\nabla u|^{2(p-1)-\theta}\\
&= CR^{-2}\sup_{A_R}u^{\gamma}\int u^{\frac{(2-p)n-p-\gamma(n-p)+\theta(n-1)-\eps(n-p)\theta}{n-p}-\frac{t[2(p-1)-\theta]}{p}}\left(u^t|\nabla u|^p\right)^{\frac{2(p-1)-\theta}{p}},
\end{align*}
for every $t\in\RR$, $\theta>0$, $\gamma\geq 0$ and every $\eps>0$ small enough, where we used the gradient estimate in Corollary \ref{c-ge} with $\alpha< 0$. We assume
\begin{equation}\label{t2c}
p-2<\theta<2(p-1)
\end{equation}
and we apply H\"older inequality to obtain
\begin{align*}
\int &u^{\frac{(2-p)n-p}{n-p}}|\nabla u|^{2(p-1)} |\nabla  \eta|^2 \\
&\leq CR^{-2}\sup_{A_R}u^{\gamma}\left(\int_{A_R} u^{\left\{\frac{(2-p)n-p-\gamma(n-p)+\theta(n-1)-\eps(n-p)\theta}{n-p}-\frac{t[2(p-1)-\theta]}{p}\right\}\frac{p}{2-p+\theta}}\right)^{\frac{2-p+\theta}{p}}\cdot\\
&\hspace{6cm}\cdot\left(\int_{A_R} u^t|\nabla u|^p\right)^{\frac{2(p-1)-\theta}{p}}.
\end{align*}
We choose
$$
t=\bar{t}:=-\frac{p+\theta+\eps(n-p)\theta}{n-p}-\gamma
$$
in order to have
$$
\left\{\frac{(2-p)n-p-\gamma(n-p)+\theta(n-1)-\eps(n-p)\theta}{n-p}-\frac{\bar{t}[2(p-1)-\theta]}{p}\right\}\frac{p}{2-p+\theta} = \frac{np+\bar{t}(n-p)}{n-p}.
$$
We observe that
\begin{equation}\label{t1c}
\bar{t}<-1\qquad\Longleftrightarrow\qquad (\gamma-2)p<(\gamma-1) n+\theta+\eps(n-p)\theta.
\end{equation}
Then, from Lemma \ref{l-t}, we obtain
\begin{align}\label{est12c}\nonumber
\int u^{\frac{(2-p)n-p}{n-p}}&|\nabla u|^{2(p-1)} |\nabla  \eta|^2\\\nonumber
&\leq CR^{-2}\sup_{A_R} u^{\gamma} \left(\int_{A_R}u^{\bar{t}}|\nabla u|^p\right)^{\frac{2(p-1)-\theta}{p}}\left(\int_{A_R}u^{\frac{np+\bar{t}(n-p)}{n-p}}\right)^{\frac{2-p+\theta}{p}}\\\nonumber
&\leq C\sup_{A_R} u^{\gamma} R^{\beta-2}\\
&\leq CR^{\beta-2+\alpha\gamma}
\end{align}
We aim at finding $\theta$  and $\gamma$ satisfying \eqref{t2c} and \eqref{t1c} such that
\begin{equation}\label{ttt2}
\beta-2+\alpha\gamma<0.
\end{equation}
{\em Case 2.1:} If $p\geq\frac{n+2}{3}$ the choice $\theta$ close to $p-2$, $\gamma=0$ and $\eps>0$ small enough gives that $\beta<2$ both if $\bar{t}+p>0$ and if $\bar{t}+p\leq 0$. Since $\alpha<0$, \eqref{ttt2} is satisfied.
%$$
%\alpha<\hat{\alpha}.
%$$

\

\noindent {\em Case 2.2:} If $2<p<\frac{n+2}{3}$, we choose again $\theta$ close to $p-2$, $\gamma$ close to $\frac{n-3p+2}{n-p}$ and $\eps>0$ small enough. Then $\bar{t}+p$ is close to $p-1>0$ and hence $\beta=-\frac{\bar{t}(n-p)}{p}$ and it is close to $\frac{n-p}{p}$. Hence a simple computation shows that, in order to verify \eqref{ttt2}, it is sufficient to choose
$$
\alpha<\frac{(3p-n)(n-p)}{p(n-3p+2)}:=\tilde{\alpha}.
$$
To conclude we observe that we have
$$
\int u^{\frac{(2-p)n-p}{n-p}}|\nabla u|^{2(p-1)} |\nabla  \eta|^2\leq C R^{-\delta}
$$
for some $\delta>0$ if one of the assumptions
\begin{itemize}
\item[(i)] $n=3$, and $2<p<3$ and  $\alpha<\check{\alpha}$;
\item[(ii)] $n=4$, and  $$2<p<\check{p}\quad\text{and}\quad\alpha<\hat{\alpha},$$ or $$\check{p}\leq p <4\quad\text{and}\quad\alpha<\check{\alpha};$$
\item[(iii)] $n=5$ or $n=6$, and $$2<p<\tfrac{n+2}{3}\quad\text{and}\quad\alpha<\bar{\alpha},$$ or $$\tfrac{n+2}{3}\leq p <\check{p}\quad\text{and}\quad\alpha<\hat{\alpha},$$ or
$$\check{p}\leq p <n\quad\text{and}\quad\alpha<\check{\alpha};$$
\item[(iv)] $n\geq 7$ and $$2<p\leq\tfrac{n}{3}\quad\text{and}\quad\alpha<\tilde{\alpha},$$ or $$\tfrac{n}{3}<p<\tfrac{n+2}{3}\quad\text{and}\quad\alpha<\bar{\alpha},$$ or $$\tfrac{n+2}{3}\leq p <\check{p}\quad\text{and}\quad\alpha<\hat{\alpha},$$ or
$$\check{p}\leq p <n\quad\text{and}\quad\alpha<\check{\alpha},$$
\end{itemize}

hold. Then letting $R\to\infty$, from \eqref{est2b} we obtain
$$
\int_{\RR^n} u^{\frac{(n-1)p}{n-p}}|\mathring{\mathbf{V}}|^2=0,
$$
and the conclusion follows as in the proof of Theorem \ref{t-e1} (i).

\

\appendix

\section{Riemannian setting}\label{Riem}

In this Appendix we consider the case of a complete, non-compact (without boundary) Riemannian manifold $(M^n,g)$ of dimension $n\geq 2$. We will emphasize the main differences with respect to Euclidean case when dealing with the same issues.

We consider positive weak solutions of
\begin{equation}\label{eq-pl-Riem}
\Delta_{p} u + u^{p^*-1} = 0 \quad \text{ in } M^n
\end{equation}
where $\Delta_{p}$ is the usual $p-$Laplace-Beltrami operator with respect to the metric $g$. Moreover, we denote with $\mathrm{Ric}$ and $\mathrm{Sec}$ the Ricci and the sectional curvatures of $(M^n,g)$, respectively.

When $\mathrm{Ric}\geq 0$ equation \eqref{eq-pl-Riem} has been recently studied, in the semilinear case $p=2$, in \cite{catmon} and \cite{FMM}; in particular, in \cite{catmon} the authors prove that the only positive classical solutions to \eqref{eq-pl-Riem} are given by the Aubin-Talenti bubbles \eqref{tal} with $p=2$ and the Riemannian manifold is isometric to the Euclidean space, provided $n=3$ or $u$ has finite energy or $u$ satisfies suitable conditions at infinity.  Furthermore, when the manifold is a Cartan-Hadamard manifold, i.e.  complete and simply connected Riemannian manifold with non positive sectional curvature, in \cite{muso} the authors prove that all the positive energy minimizing solutions to \eqref{eq-pl-Riem} are given by the Aubin-Talenti bubbles \eqref{tal} and the Riemannian manifold is isometric to the Euclidean space, assuming the validity of an optimal isoperimetric inequality on $M^n$.

In this appendix we deal with the quasilinear case, i.e. $1<p<n$, and we show that the analogue of Theorems \ref{t-e1}-\ref{t-se0}-\ref{t-se1}-\ref{t-se2} hold, provided that the Riemannian manifold $(M^n,g)$ satisfies:
\begin{itemize}
\item[(i)] $\mathrm{Ric}\geq 0$, if $1<p<2$,  or
\item[(ii)] $\mathrm{Sec}\geq 0$, if $2<p<n$.
\end{itemize}
Of course, if $n=2$, both conditions are replaced by non-negativity of the scalar/Gauss curvature. The main differences with respect to the Euclidean case are the following:
\begin{itemize}
	\item By assumption, we choose $u$ to be a positive weak solution having the regularity given in \eqref{eq-r1}--\eqref{eq-r4}.
	\item The estimate in Corollary \ref{c-fond} becomes the following
\begin{equation}\label{eq-ap2}
  \int u^{\frac{(n-1)p}{n-p}}|\mathring{\mathbf{V}}|^2\eta^l+ \int u^{\frac{(n-1)p}{n-p}}\mathrm{Ric}(\mathbf{v},\mathbf{v})\eta^l \leq C\int u^{\frac{(2-p)n-p}{n-p}}|\nabla u|^{2(p-1)} |\nabla  \eta|^2\eta^{l-2}\, ,
\end{equation}
the Ricci tensor appears when computing
\begin{align*}
\diver(\mathbf{v}\cdot \nabla \mathbf{v})&= \nabla_j (\nabla_i \mathbf{v}_j \mathbf{v}_i)=\nabla_j\nabla_i \mathbf{v}_j \mathbf{v}_i + \nabla_i \mathbf{v}_j \nabla_j \mathbf{v}_i \\
&=\nabla_i\nabla_j \mathbf{v}_j \mathbf{v}_i - \mathrm{Ric}(\mathbf{v},\mathbf{v}) +|\nabla \mathbf{v}|^2
\end{align*}
since $\nabla_j \mathbf{v}_i$ is symmetric. This identity is used in the proof of Propositions 6.2 and 7.1 in \cite{serzou}, which are used in our proof of Corollary \ref{c-fond}.

\item Let $r(x):=\mathrm{dist}(x,p)$ be the geodesic distance from a fixed point $p\in M$. Due to the presence of the curvature, when $\mathrm{Ric}\geq 0$ the Laplacian comparison implies that the function $r^{-\frac{n-p}{p-1}}$ is a weak $p-$subharmonic function, i.e. its $p-$Laplacian is non-negative. Hence, Lemma \ref{l-ser} holds true also in this setting.

\item Lemma \ref{l-dario} and Lemma \ref{l-2} still hold, provided
$$
\mathrm{Vol}(B_R)\leq C R^n
$$
for every $R>0$, which is ensured by Bishop-Gromov volume comparison.

\item The Bocher formula in Lemma \ref{pboch} becomes the following (see e.g. \cite{val})
\begin{align*}
\frac1p P_f(|\nabla f|^p) &\geq \frac1n (\Delta_p f)^2 +\frac{n}{n-1}\left(\frac1n \Delta_p f - (p-1)|\nabla f|^{p-4}\nabla^2 f (\nabla f, \nabla f)\right)^2\\
&\quad+ |\nabla f|^{p-2}\left[\langle \nabla f, \nabla \Delta_p f\rangle - (p-2) \frac{\Delta_p f}{|\nabla f|^2}\nabla^2 f (\nabla f, \nabla f)  \right] \\
&\quad + |\nabla f|^{2(p-2)}\mathrm{Ric}(\nabla f,\nabla f) \, .
\end{align*}
Thus, Lemma \ref{pboch} still holds thanks to the condition $\mathrm{Ric}\geq 0$.

\item Following the proof of the gradient estimate in Proposition \ref{p-ge} (in particular, the estimate above \eqref{eq11}), one can observe that the same arguments works if we can construct a family of smooth cut-off functions $\phi$ defined on $B_{2R}$ such that
\begin{equation}\label{eq-ap}
|\nabla \phi|\leq \frac{C}{R}\phi^{1-\delta},\qquad |\nabla f|^2 \Delta\phi+(p-2)\nabla^2\phi(\nabla f,\nabla f)\geq -\frac{C}{R^2}\phi^{1-2\delta}|\nabla f|^2.
\end{equation}
Assume that $(M^n,g)$ has $\mathrm{Sec}\geq 0$. Let $r(x):=\mathrm{dist}(x,p)$ be the geodesic distance from a fixed point $p\in M$ and let $\psi\in C^2([0,\infty))$ be such that $\psi\equiv 1$ in $[0,1)$, $\psi\equiv 0$ in $[2,\infty)$, $\psi'\leq 0$ and $0\leq\psi\leq 1$. Since
$$
\nabla^2 \psi(r)=\psi'\nabla^2 r+\psi'' dr\otimes dr,
$$
by standard Hessian comparison (see e.g. \cite{Pet}) we known that, outside the cutlocus of $p$, one has
$$
\nabla^2 r \leq \frac{n-1}{r}g_{ij},
$$
and thus the function $\psi(r)$ satisfies $\psi'\leq 0$ and
$$
|\nabla \psi(r)|\leq C,\qquad   |\nabla f|^2 \Delta\psi+(p-2)\nabla^2\psi(\nabla f,\nabla f)\geq -\frac{C}{r}|\nabla f|^2.
$$
Therefore, the function $\phi(r)=\psi\left(\frac{r}{R}\right)^{1/\delta}$ satisfies \eqref{eq-ap} (outside the cutlocus of $p$). To overcome the lack of regularity in the cutlocus of $p$ one can use the so called Calabi trick. Therefore all the arguments in the proof Proposition \ref{p-ge} go through  and we obtain the following extension to the Riemannian setting:
\begin{proposition}\label{p-ge-r}  Let $(M^n,g)$ be a complete Riemannian manifold with nonnegative sectional curvature. Let $u$ be a positive weak solution of equation \eqref{eq-pl} with $1<p<n$. Then, for every $0<\eps<\frac{p-1}{n-p}$ it holds
$$
|\nabla u|\leq C\left(\sup_{B_{2R}(x_0)} u^{\frac{1}{n-p}+\eps}+R^{-\eps\frac{n-p}{p-1}}\right) u^{\frac{n-1}{n-p}-\eps}\qquad\text{on }B_{R}(x_0)
$$
for some $C=C(n,p,\eps)>0$, for every $R>0$ and every $x_0\in M^n$.
\end{proposition}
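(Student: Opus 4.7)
The plan is to mirror the Euclidean proof of Proposition \ref{p-ge} step by step, inserting the Riemannian modifications listed in the preceding bullet points. Set $f := u^a$ with $a := -\frac{p-1}{n-p} + \eps$. Where $|\nabla f|>0$, the algebraic identity for $\Delta_p f$ in terms of $f$ and $|\nabla f|^p$ is purely local and carries over verbatim from the Euclidean case. Applying the Riemannian $p$-Bochner inequality recorded in the appendix, the extra curvature contribution $|\nabla f|^{2(p-2)}\mathrm{Ric}(\nabla f,\nabla f)$ is nonnegative since $\mathrm{Sec}\geq 0$ implies $\mathrm{Ric}\geq 0$, so it can be dropped. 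For a suitable $\lambda=\lambda(\eps)>0$ one obtains the same pointwise inequality \eqref{estp}, namely
$$
\tfrac{1}{p}P_f(|\nabla f|^p) \;\geq\; \lambda f^{-2}|\nabla f|^{2p} - c_2 f^{\frac{(a-1)(p-1)+q}{a}-1}|\nabla f|^p + \bigl(c_4 f^{\frac{(a-1)(p-1)+q}{a}}|\nabla f|^{-2} + c_5 f^{-1}|\nabla f|^{p-2}\bigr)\langle \nabla|\nabla f|^p,\nabla f\rangle.
$$

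Next I would run the cutoff/maximum-principle argument for $H := \phi|\nabla f|^p$ on $B_{2R}(x_0)$, using the cutoff $\phi(x):=\psi(r(x)/R)^{1/\delta}$ with $r(x):=\mathrm{dist}(x,x_0)$ and $\psi\in C^2([0,\infty))$ as in the Euclidean proof. Under $\mathrm{Sec}\geq 0$, the Hessian comparison theorem gives $\nabla^2 r \leq \tfrac{1}{r}\,g$ outside the cutlocus of $x_0$, and a direct computation (essentially the one sketched in the appendix above) shows that $\phi$ satisfies the two cutoff estimates in \eqref{eq-ap}. At a maximum point $\bar x$ of $H$ one has $\nabla H(\bar x)=0$ and $P_f H(\bar x)\leq 0$; combining this with the Bochner-type inequality, Cauchy--Schwarz and Young inequalities, and the Riemannian version of Lemma \ref{l-ser} (valid under $\mathrm{Ric}\geq 0$ via the Laplacian comparison applied to $r^{-(n-p)/(p-1)}$, as noted in the appendix) reproduces verbatim the quadratic inequality $0 \geq \tfrac{\lambda}{2} H^2 - C\bigl(u^{2\theta} + R^{-2\eps\frac{p(n-p)}{p-1}}\bigr)$ derived in the Euclidean setting, with the exponent $\theta>0$ given in \eqref{teta}. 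Solving this inequality and unwinding the definition of $f$ yields the claimed bound on $B_R(x_0)$.

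The main technical obstruction is the possible non-smoothness of $r$ at the cutlocus of $x_0$: the maximum point $\bar x$ may lie there, in which case neither the cutoff estimates nor the Hessian comparison are classically available at $\bar x$. I would resolve this with the standard Calabi trick: pick a point $x_\sigma$ on a minimizing geodesic from $x_0$ to $\bar x$ at distance $\sigma>0$ from $x_0$, and replace $r$ by $r_\sigma(x):=\mathrm{dist}(x,x_\sigma)+\sigma$, which is smooth in a neighborhood of $\bar x$, agrees with $r$ at $\bar x$, and dominates $r$ globally. Applying the maximum-principle argument to $\phi_\sigma(x):=\psi(r_\sigma(x)/R)^{1/\delta}$ gives the same pointwise bound at $\bar x$, and letting $\sigma\to 0^+$ delivers the estimate on the original cutoff. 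Once this routine device is in place, the remaining bookkeeping (choosing $\delta<\min\{1/p,1/2\}$, applying the generalized Young inequality, and collecting exponents) is identical to the Euclidean case, so no further obstacle arises.
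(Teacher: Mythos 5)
Your proposal is correct and follows the paper's own argument: mirror the Euclidean proof of Proposition \ref{p-ge}, drop the nonnegative Ricci term in the Riemannian $p$-Bochner inequality (nonnegative since $\mathrm{Sec}\geq 0$ forces $\mathrm{Ric}\geq 0$), construct the cutoff $\phi=\psi(r/R)^{1/\delta}$ and verify \eqref{eq-ap} via Hessian comparison under $\mathrm{Sec}\geq 0$, invoke the Riemannian version of Lemma \ref{l-ser} via Laplacian comparison, and resolve the non-smoothness of $r$ at the cutlocus with the Calabi trick. These are exactly the modifications the appendix lists, so your argument coincides with the intended proof.
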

We explicitly note that this estimate is used only in the case $2<p<n$.
\begin{remark}
Gradient estimates for positive $p-$harmonic functions have been obtained in \cite{kotni} and \cite{wanzha}. In particular in \cite{wanzha} the authors managed to avoid imposing conditions on the sectional curvature, using integral estimates based on a Moser iteration argument, which only uses nonnegative Ricci curvature and first derivatives of the distance function. We expect that the same argument could work in our setting.
\end{remark}

\item In all the proofs of  Theorems \ref{t-e1}-\ref{t-se0}-\ref{t-se1}-\ref{t-se2} we used the fact that volume of geodesic balls has at most Euclidean growth, which is guaranteed by our curvature assumptions (i) and (ii), as already observed.

\item The final step in the proofs of Theorems \ref{t-e1}-\ref{t-se0}-\ref{t-se1}-\ref{t-se2}, in the Riemannian setting, goes as follows. From \eqref{eq-ap2}, we obtain
$$
\int_M u^{\frac{(n-1)p}{n-p}}|\mathring{\mathbf{V}}|^2+ \int_M u^{\frac{(n-1)p}{n-p}}\mathrm{Ric}(\mathbf{v},\mathbf{v})=0
$$
i.e.
\begin{equation}\label{eq12r}
\mathring{\mathbf{V}} = \nabla  \mathbf{v}-\frac{\diver\mathbf{v}}{n}g \equiv 0\quad\text{in } \Omega_{cr}^c,\qquad \mathrm{Ric}(\mathbf{v},\mathbf{v})\equiv 0\quad\text{in }M.
\end{equation}
Let $\Omega_0\subseteq\Omega_{cr}^c $ be a connected component of $\Omega_{cr}^c$. Arguing as in the proof of Theorem \ref{t-e1}, by elliptic regularity, we have $\diver\mathbf{v}\in C^{1,\alpha}_{\text{loc}}(\Omega_0)$. Differentiating the first identity in \eqref{eq12r}, we get
$$
\nabla_i \diver \mathbf v = n \nabla_j\nabla_i \mathbf v_j = n \nabla_i \diver \mathbf v - \mathrm{Ric}_{ij}\mathbf v_j=n \nabla_i \diver \mathbf v.
$$
Therefore $\diver\mathbf v=\text{const}$ on $\Omega_0$. Hence, the vector field $\mathbf v$ is homotetic, i.e. it satisfies
$$
\nabla_i \mathbf{v}_j + \nabla_j \mathbf{v}_i = \lambda g_{ij}, \quad\lambda\in\RR.
$$
Therefore, by a classical result of Kobayashi  \cite{kob}, we have that $\Omega_0$ must be locally Euclidean and we conclude as in the proof of Theorem \ref{t-e1}.
\end{itemize}

\

\

\begin{ackn}
\noindent The authors would like to thank Prof. A. Farina for helpful discussions concerning the regularity of solutions which greatly improved the exposition in Section \ref{Pre}.

\noindent The first author is member of the {\em GNSAGA, Gruppo Nazionale per le Strutture Algebriche, Geometriche e le loro Applicazioni} of INdAM. The second and the third authors are  members of {\em GNAMPA, Gruppo Nazionale per l'Analisi Matematica, la Probabilit\`a e le loro Applicazioni} of INdAM.
\end{ackn}

\

\

\noindent{\bf Data availability statement}

\noindent Data sharing not applicable to this article as no datasets were generated or analysed during the current study.

\

\

\

\

\end{document}